\theoremstyle{plain}
\newtheorem{theorem}{Theorem}[section]
\newtheorem{corollary}{Corollary}[section]
\newtheorem{proposition}{Proposition}[section]
\theoremstyle{definition}
\newtheorem{definition}{Definition}[section]
\theoremstyle{remark}
\newtheorem{remark}{Remark}[section]
\DeclareMathOperator*{\argmin}{arg\,min}
\DeclareMathOperator{\dist}{dist}
\DeclareMathOperator{\lineal}{lin}
\DeclareMathOperator{\trace}{Tr}
\author{M.V. Dolgopolik}
\title{A Unified Approach to the Global Exactness of Penalty and Augmented Lagrangian Functions I: Parametric Exactness}
\begin{document}

\maketitle

\begin{abstract}
In this two-part study we develop a unified approach to the analysis of the global exactness of various penalty and
augmented Lagrangian functions for constrained optimization problems in finite dimensional spaces. This approach allows
one to verify in a simple and straightforward manner whether a given penalty/augmented Lagrangian function is exact,
i.e. whether the problem of unconstrained minimization of this function is equivalent (in some sense) to the original
constrained problem, provided the penalty parameter is sufficiently large. Our approach is based on the so-called
localization principle that reduces the study of global exactness to a local analysis of a chosen merit function near
globally optimal solutions. In turn, such local analysis can usually be performed with the use of sufficient optimality
conditions and constraint qualifications.

In the first paper we introduce the concept of global parametric exactness and derive the localization
principle in the parametric form. With the use of this version of the localization principle we recover existing simple
necessary and sufficient conditions for the global exactness of linear penalty functions, and for the existence of
augmented Lagrange multipliers of Rockafellar-Wets' augmented Lagrangian. Also, we obtain completely new necessary and
sufficient conditions for the global exactness of general nonlinear penalty functions, and for the global exactness of a
continuously differentiable penalty function for nonlinear second-order cone programming problems. We briefly
discuss how one can construct a continuously differentiable exact penalty function for nonlinear semidefinite
programming problems, as well.
\end{abstract}

\section{Introduction}

One of the main approaches to the solution of a constrained optimization problem consists in the reduction of this
problem to an unconstrained one (or a sequence of unconstrained problems) with the use of \textit{merit} (or
\textit{auxiliary}) functions. Such merit functions are usually defined as a certain convolution of the objective
function and constraints, and they almost always include the penalty parameter that must be properly chosen for the
reduction to work. This approach led to the development of various penalty and barrier methods 
\cite{FiaccoMcCormick,AuslenderCominettiHaddou,BenTal,Auslender}, primal-dual methods based on the use of augmented
Lagrangians \cite{BirginMartinez_book} and many other methods of constrained optimization. 

There exist numerous results on the duality theory for various merit functions, such as penalty and augmented Lagrangian
functions. A modern general formulation of the augmented Lagrangian duality for nonconvex problems based on a geometric
interpretation of the augmented Lagrangian in terms of subgradients of the optimal value function was proposed by
Rockafellar and Wets in \cite{RockWets}, and further developed in
\cite{HuangYang2003,ZhouYang2004,HuangYang2005,NedichOzdaglar}. Let us also mention several extensions
\cite{GasimovRubinov,BurachikRubinov,ZhouYang2006,ZhangYang2008,ZhouYang2009,BurachikIusemMelo,ZhouYang2012,
WangYangYang2014} of this augmented Lagrangian duality theory aiming at including some other augmented Lagrangian and
penalty functions into the unified framework proposed in \cite{RockWets}. A general duality theory for
\textit{nonlinear} Lagrangian and penalty functions was developed in
\cite{Rubinov2000,RubinovYang2003,PenotRubinov2005,WangYangYang2007}. Another general approach to the study of duality
based on the \textit{image space analysis} was systematically studied in
\cite{EvtushenkoRubinovZhadanI,Giannessi_book,Giannessi2007,Mastroeni2012,LiFengZhang2013,ZhuLi2014,ZhuLi2014_2,
XuLi2014}. 

In contract to duality theory, few attempts
\cite{EvtushenkoZhadan,EvtushenkoZhadan_In_Collection,DiPilloGrippo89,DiPillo1994,EvtushenkoRubinovZhadanII} have been
made to develop a general theory of a \textit{global exactness} of merit functions, despite the abundance of particular
results on the exactness of various penalty/augmented Lagrangian functions. Furthermore, the existing general results on
global exactness are unsatisfactory, since they are very restrictive and cannot be applied to many particular cases.

Recall that a penalty function is called exact iff its points of global minimum coincide with globally optimal solutions
of the constrained optimization problem under consideration. The concept of exactness of a linear penalty function was
introduced by Eremin \cite{Eremin} and Zangwill \cite{Zangwill} in the mid-1960s, and was further investigated by many
researches
(see~\cite{Pietrzykowski,EvansGouldTolle,Bertsekas,HanMangasarian,IoffeNSC,Rosenberg,Mangasarian,Burke,WuBaiYangZhang,
Antczak,Demyanov,Demyanov0,DemyanovDiPilloFacchinei,DiPilloGrippo,DiPilloGrippo89,ExactBarrierFunc,Zaslavski,
Dolgopolik_UT} and the references therein). A class of \textit{continuously differentiable} exact penalty functions was
introduced by Fletcher \cite{Fletcher70} in 1970. Fletcher's penalty functions was modified and thoroughly investigated
in
\cite{Fletcher70,Fletcher73,MukaiPolak,GladPolak,BoggsTolle1980,Bertsekas_book,HanMangasarian_C1PenFunc,DiPilloGrippo85,
DiPilloGrippo86, Lucidi92, ContaldiDiPilloLucidi_93,FukudaSilva,AndreaniFukuda}. Di Pillo and Grippo proposed to
consider an \textit{exact augmented Lagrangian function} \cite{DiPilloGrippo1979} in 1979. This class of augmented
Lagrangian functions was studied and applied to various optimization problems in
\cite{DiPilloGrippo1980,DiPilloGrippo1982,Lucidi1988,DiPilloLucidiPalagi1993,DiPilloLucidi1996,DiPilloLucidi2001,
DiPilloEtAl2002,DiPilloLiuzzi2003,DuZhangGao2006,DuLiangZhang2006,LuoWuLiu2013,DiPilloLiuzzi2011,FukudaLourenco},
while a general theory of globally exact augmented Lagrangian functions was developed by the author in
\cite{Dolgopolik_GSP}. The theory of nonlinear exact penalty functions was developed by Rubinov and his colleagues  
\cite{RubinovGloverYang1999,RubinovYangBagirov2002,RubinovGasimov2003,RubinovYang2003,YangHuang_NonlinearPenalty2003}
in the late 1990s and the early 2000s.  Finally, a new class of exact penalty functions was introduced by Huyer and
Neumaier \cite{HuyerNeumaier} in 2003. Later on, this class of penalty functions was studied by many researchers, and
applied to various optimization problems, including optimal control problems 
\cite{Bingzhuang,WangMaZhou,LiYu,MaLiYiu,LinWuYu,JianLin,LinLoxton,MaZhang2015,ZhengZhang2015,Dolgopolik_OptLet,
Dolgopolik_OptLet2,DolgopolikMV_UT_2}.

It should be noted that the problem of the existence of \textit{global saddle points} of augmented Lagrangian functions
is closely related to the exactness property of these functions. This problem was studied for general cone constrained
optimization problems in \cite{ShapiroSun,ZhouZhouYang2014}, for mathematical programming problems in
\cite{LiuYang,WangLi2009,LiuTangYang2009,LuoMastroeniWu2010,ZhouXiuWang,WuLuo2012b,WangZhouXu2009,SunLi,WangLiuQu}, for
nonlinear second order cone programming problems in \cite{ZhouChen2015}, for nonlinear semidefinite programming problems
in \cite{WuLuoYang2014,LuoWuLiu2015}, and for semi-infinite programming problems in
\cite{RuckmannShapiro,BurachikYangZhou2017}. A general theory of the existence of global saddle point of augmented
Lagrangian functions for cone constrained optimization problems was presented in \cite{Dolgopolik_GSP}. Finally, there
is also a problem of the existence of \textit{augmented Lagrange multipliers}, which can be viewed as the study of the
global exactness of Rockafellar-Wets' augmented Lagrangian function. Various results on the existence of augmented
Lagrange multipliers were obtained in
\cite{ShapiroSun,ZhouZhouYang2014,Dolgopolik_ALRW,RuckmannShapiro,KanSong,KanSong2,BurachikYangZhou2017}.

The anaylsis of the proofs of the main results of the aforementioned papers indicates that the underlying ideas of
these papers largely overlap. Our main goal is to unveil the core idea behind these result, and present a general theory
of the global exactness of penalty and augmented Lagrangian functions for finite dimensional constrained optimization
problems that can be applied to all existing penalty and augmented Lagrangian functions. The central result of our
theory is the so-called \textit{localization principle}. This principle allows one to reduce the study of the global
exactness of a given merit function to a local analysis of the behaviour of this function near globally optimal
solutions of the original constrained problem. In turn, such local analysis can be usually performed with the use of
sufficient optimality conditions and/or constraint qualifications. Thus, the localization principle furnishes one with a
simple technique for proving the global exactness of almost any merit function with the use of the standard tools of
constrained optimization (namely, constraint qualifications and optimality conditions). The localization principle was
first derived by the author for linear penalty functions in \cite{Dolgopolik_UT}, and was further extended to other
penalty and augmented Lagrangian functions in \cite{Dolgopolik_GSP,DolgopolikMV_UT_2,Dolgopolik_ALRW}

In order to include almost all imaginable penalty and augmented Lagrangian functions into the general theory, we
introduce and study the concept of global exactness for an arbitrary function depending on the primal variables, the
penalty parameter and some additional parameters, and do not impose any assumptions on the structure of this function.
Instead, natural assumptions on the behaviour of this function arise within the localization principle as necessary and
sufficient conditions for the global exactness. 

It might seem natural to adopt the approach of the image space analysis
\cite{Giannessi_book,Giannessi2007,Mastroeni2012,LiFengZhang2013,ZhuLi2014,ZhuLi2014_2,XuLi2014} for the study of global
exactness. However, the definition of separation function from the image space analysis imposes some assumptions on the
structure of admissible penalty/aug\-mented Lagrangian functions, which create some unnecessary restrictions. In
contrast, our approach to the global exactness avoids any such assumptions.

Finally, let us note that there are several possible ways to introduce the concept of the global exactness of a merit
function. Each part of this two-part study is devoted to the analysis of one of the possible approaches to the
definition of global exactness. In this paper we study the so-called global \textit{parametric} exactness, which
naturally arises during the study of various exact penalty functions and augmented Lagrange multipliers.

The paper is organized as follows. In Section~\ref{Sect_ParametricExactness} we introduce the definition of global
parametric exactness and derive the localization principle in the parametric form. This version of localization
principle is applied to the study of the global exactness of several penalty and augmented Lagrangian in 
Section~\ref{Sect_Appl_ParametricExactness}. In particular, in this section we recover existing necessary and sufficient
conditions for the global exactness of linear penalty function, and for the existence of augmented Lagrange multipliers.
We also obtain completely new necessary and sufficient conditions for the global exactness of a continuously
differentiable penalty function for nonlinear second-order cone programming problems, and briefly discuss how one can
define a globally exact continuously differentiable penalty function for nonlinear semidefinite programming problems.
Necessary preliminary results are given in Section~\ref{Sect_Preliminaries}.

\section{Preliminaries}
\label{Sect_Preliminaries}

Let $X$ be a finite dimensional normed space, and $M, A \subset X$ be nonempty sets. Throughout this article, we study
the following optimization problem
$$
  \min f(x) \quad \text{subject to} \quad x \in M, \quad x \in A,
  \eqno{(\mathcal{P})}
$$
where $f\colon X \to \mathbb{R} \cup \{ + \infty \}$ is a given function. Denote by $\Omega = M \cap A$ the set of
feasible points of this problem. From this point onwards, we suppose that there exists $x \in \Omega$ such that
$f(x) < + \infty$, and that there exists a globally optimal solution of $(\mathcal{P})$. 

Our aim is to somehow ``get rid'' of the constraint $x \in M$ in the problem $(\mathcal{P})$ with the use of an
auxiliary function $F(\cdot)$. Namely, we want to construct an auxiliary function $F(\cdot)$ such that globally optimal
solutions of the problem $(\mathcal{P})$ can be easily recovered from points of global minimum of $F(\cdot)$ on the
set $A$. To be more precise, our aim is to develop a general theory of such auxiliary functions. 

\begin{remark}
It should be underlined that only the constraint $x \in M$ is incorporated into an auxiliary function $F(\cdot)$, while
the constraint $x \in A$ must be taken into account explicitly. Usually, the set $A$ represents ``simple'' constrains
such as bound or linear ones. Alternatively, one can utilize one auxiliary function in order to ``get rid'' of one kind
of constraints, and then utilize a different type of auxiliary functions in order to ``get rid'' of other kind of
constraints. Overall, the differentiation of the constraints onto the main ones ($x \in M$) and the additional ones 
($x \in A$) gives one more flexibility in the choice of the tools for solving constrained optimization problems.
\end{remark}

Let $\Lambda$ be a nonempty set of parameters that are denoted by $\lambda$, and let $c > 0$ be 
\textit{the penalty parameter}. Hereinafter, we suppose that a function 
$F \colon X \times \Lambda \times (0, + \infty) \to \mathbb{R} \cup \{ + \infty \}$, $F = F(x, \lambda, c)$, is given.
A connection between this function and the problem $(\mathcal{P})$ is specified below. 

The function $F$ can be, for instance, a penalty function with $\Lambda$ being the empty set or an augmented Lagrangian
function with $\lambda$ being a Lagrange multiplier. However, in order not to restrict ourselves to any specific case,
we call $F(x, \lambda, c)$ \textit{a separating function} for the problem $(\mathcal{P})$. 

\begin{remark}
The motivation behind the term ``separating function'' comes from a geometric interpretation of many penalty and
augmented Lagrangian function as nonlinear functions separating some nonconvex sets. This point of view on penalty and
augmented Lagrangian functions is systematically utilized within the image space analysis
\cite{Giannessi_book,Giannessi2007,Mastroeni2012,LiFengZhang2013,LuoWuLiu2013,ZhuLi2014,ZhuLi2014_2,XuLi2014}.
\end{remark}

\begin{remark}
Let us note that since we consider only separating functions depending on the penalty parameter $c > 0$, the so-called
\textit{objective penalty functions} (see, e.g., \cite{EvtushenkoRubinovZhadanII,MengHuDang2004,MengDangJiang2013})
cannot be considered within our theory.
\end{remark}

\section{A General Theory of Parametric Exactness}
\label{Sect_ParametricExactness}

In the first part of our study, we consider the simplest case when one minimizes the function $F(x, \lambda, c)$ with
respect to $x$, and views $\lambda$ as a tuning parameter. Let us introduce the formal definition of exactness of the
function $F(x, \lambda, c)$ in this case.

\begin{definition}
The separating function $F(x, \lambda, c)$ is said to be \textit{globally parametrically exact} iff there exist
$\lambda^* \in \Lambda$ and $c^* > 0$ such that for any $c \ge c^*$ one has
$$
  \argmin_{x \in A} F(x, \lambda^*, c) = \argmin_{x \in \Omega} f(x).
$$
The greatest lower bound of all such $c^* > 0$ is called \textit{the least exact penalty parameter} of the function
$F(x, \lambda^*, c)$, and is denoted by $c^*(\lambda^*)$, while $\lambda^*$ is called \textit{an exact tuning
parameter}.
\end{definition}

Thus, if $F(x, \lambda, c)$ is globally parametrically exact and an exact tuning parameter $\lambda^*$ is known, then
one can choose sufficiently large $c > 0$ and minimize the function $F(\cdot, \lambda^*, c)$ over the set $A$ in order
to find globally optimal solutions of the problem $(\mathcal{P})$. In other words, if the function $F(x, \lambda, c)$ is
globally exact, then one can remove the constraint $x \in M$ with the use of the function $F(x, \lambda, c)$ without
loosing any information about globally optimal solutions of the problem $(\mathcal{P})$.

Our main goal is to demonstrate that the study of the global parametric exactness of 
the separating function $F(x, \lambda, c)$ can be easily reduced to the study of a local behaviour of $F(x, \lambda, c)$
near globally optimal solutions of the problem $(\mathcal{P})$. This reduction procedure is called \textit{the
localization principle}. 

At first, let us describe a desired local behaviour of the function $F(x, \lambda, c)$ near optimal solutions.

\begin{definition}
Let $x^*$ be a locally optimal solution of the problem $(\mathcal{P})$. The separating function $F(x, \lambda, c)$ is
called \textit{locally parametrically exact} at $x^*$ iff there exist $\lambda^* \in \Lambda$, $c^* > 0$ and  a
neighbourhood $U$ of $x^*$ such that for any $c \ge c^*$ one has
$$
  F(x, \lambda^*, c) \ge F(x^*, \lambda^*, c) \quad \forall \quad x \in U \cap A.
$$
The greatest lower bound of all such $c^* > 0$ is called \textit{the least exact penalty parameter} of the function
$F(x, \lambda^*, c)$ at $x^*$, and is denoted by $c^*(x^*, \lambda^*)$, while $\lambda^*$ is called \textit{an exact
tuning parameter} at $x^*$.
\end{definition}

Thus, $F(x, \lambda, c)$ is locally parametrically exact at a point $x^*$ with an exact tuning parameter $\lambda^*$ iff
there exists $c^* > 0$ such that $x^*$ is a local (uniformly with respect to $c \in [c^*, + \infty)$) minimizer of the
function $F(\cdot, \lambda^*, c)$ on the set $A$. Observe also that if the function $F(x, \lambda, c)$ is nondecreasing
in $c$, then $F(x, \lambda, c)$ is locally parametrically exact at $x^*$ with an exact tuning parameter $\lambda^*$ iff
there exists $c^*$ such that $x^*$ is a local minimizer of $F(\cdot, \lambda^*, c^*)$ on the set $A$.

Recall that $c > 0$ in $F(x, \lambda, c)$ is called \textit{the penalty parameter}; however, a connection of 
the parameter $c$ with penalization is unclear from the definition of the function $F(x, \lambda, c)$. We need the
following definition in order to clarify this connection.

\begin{definition}
Let $\lambda^* \in \Lambda$ be fixed. One says that $F(x, \lambda, c)$ is a \textit{penalty-type} separating function
for $\lambda = \lambda^*$ iff there exists $c_0 > 0$ such that if
\begin{enumerate}
\item{$\{ c_n \} \subset [c_0, + \infty)$ is an increasing unbounded sequence;
}

\item{$x_n \in \argmin_{x \in A} F(x, \lambda^*, c_n)$, $n \in \mathbb{N}$;
}

\item{$x^*$ is a cluster point of the sequence $\{ x_n \}$,
}
\end{enumerate}
then $x^*$ is a globally optimal solution of the problem $(\mathcal{P})$.
\end{definition}

Roughly speaking, $F(x, \lambda, c)$ is a penalty-type separating function for $\lambda = \lambda^*$ iff global
minimizers of $F(\cdot, \lambda^*, c)$ on the set $A$ tend to globally optimal solutions of the problem $(\mathcal{P})$
as $c \to + \infty$. Thus, if the separating function $F(x, \lambda, c)$ is of penalty-type, then $c$ plays the role of
penalty parameter, since an increase of $c$ forces global minimizers of $F(\cdot, \lambda^*, c)$ to get closer to
the feasible set of the problem $(\mathcal{P})$.

Note that if the function $F(\cdot, \lambda^*, c)$ does not attain a global minimum on the set $A$ for any $c$ greater
than some $c_0 > 0$, then, formally, $F(x, \lambda, c)$ is a penalty-type separating function for $\lambda = \lambda^*$.
Similarly, if all sequences $\{ x_n \}$, such that $x_n \in \argmin_{x \in A} F(x, \lambda^*, c_n)$, $n \in \mathbb{N}$
and $c_n \to + \infty$ as $n \to \infty$, do not have cluster points, then $F(x, \lambda, c)$ is a penalty-type
separating function for $\lambda = \lambda^*$, as well. Therefore we need an additional definition that allows one to
exclude such pathological behaviour of the function $F(x, \lambda, c)$ as $c \to \infty$ (see~\cite{Dolgopolik_UT},
Sections 3.2--3.4, for the motivation behind this definition). 

Recall that $A$ is a subset of a finite dimensional normed space $X$.

\begin{definition}
Let $\lambda^* \in \Lambda$ be fixed. The separating function $F(x, \lambda, c)$ is said to be \textit{non-degenerate}
for $\lambda = \lambda^*$ iff there exist $c_0 > 0$ and $R > 0$ such that for any $c \ge c_0$ the function 
$F(\cdot, \lambda^*, c)$ attains a global minimum on the set $A$, and there 
exists $x(c) \in \argmin_{x \in A} F(x, \lambda^*, c)$ such that $\| x(c) \| \le R$.
\end{definition}

Roughly speaking, the non-degeneracy condition does not allow global minimizers of $F(\cdot, \lambda^*, c)$ on the set
$A$ to escape to infinity as $c \to \infty$. Note that if the set $A$ is bounded, then $F(x, \lambda, c)$ is
non-degenerate for $\lambda = \lambda^*$ iff the function $F(\cdot, \lambda^*, c)$ attains a global minimum on the set
$A$ for any $c$ large enough.

Now, we are ready to formulate and prove the localization principle. Recall that $\Omega$ is the feasible set of the
problem $(\mathcal{P})$. Denote by $\Omega^*$ the set of globally optimal solutions of this problem.

\begin{theorem}[Localization Principle in the Parametric Form I] \label{Thrm_LocPrin_Param}
Suppose that the validity of the condition
\begin{equation} \label{InclImpliesExactness}
  \Omega^* \cap \argmin_{x \in A} F(x, \lambda^*, c) \ne \emptyset
\end{equation}
for some $\lambda^* \in \Lambda$ and $c > 0$ implies that $F(x, \lambda, c)$ is globally parametrically exact with the
exact tuning parameter $\lambda^*$. Let also $\Omega$ be closed, and $f$ be l.s.c. on $\Omega$. Then the separating
function $F(x, \lambda, c)$ is globally parametrically exact if and only if there exists $\lambda^* \in \Lambda$ such
that
\begin{enumerate}
\item{$F(x, \lambda, c)$ is of penalty-type and non-degenerate for $\lambda = \lambda^*$;
}

\item{$F(x, \lambda, c)$ is locally parametrically exact with the exact tuning parameter $\lambda^*$ at every globally
optimal solution of the problem $(\mathcal{P})$.
}
\end{enumerate}
\end{theorem}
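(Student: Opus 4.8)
The plan is to prove the two implications separately, with the substantive work concentrated in the sufficiency direction, where the stated hypothesis reduces the problem to exhibiting a single penalty parameter $c$ satisfying \eqref{InclImpliesExactness}.

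For necessity, I would start from the assumption that $F$ is globally parametrically exact, so that there are $\lambda^*$ and $c^*$ with $\argmin_{x \in A} F(x, \lambda^*, c) = \Omega^*$ for all $c \ge c^*$. Non-degeneracy is then immediate: fixing any $x_0 \in \Omega^*$ (which exists by assumption), the point $x_0$ is a global minimizer of $F(\cdot, \lambda^*, c)$ on $A$ for every $c \ge c^*$, so one may take $c_0 = c^*$ and $R = \|x_0\|$. Local parametric exactness at each $x^* \in \Omega^*$ is equally direct, since such an $x^*$ is in fact a global minimizer of $F(\cdot, \lambda^*, c)$ on $A$ for $c \ge c^*$, hence a local one on $U \cap A$ for any neighbourhood $U$. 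The only point requiring the regularity hypotheses is the penalty-type property: any admissible sequence $\{ x_n \}$ lies in $\Omega^*$ (as $c_n \ge c_0 = c^*$), so I would observe that $\Omega$ closed together with $f$ l.s.c. on $\Omega$ forces $\Omega^*$ to be closed, whence every cluster point of $\{ x_n \}$ again belongs to $\Omega^*$ and is globally optimal.

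For sufficiency, assume $\lambda^*$ satisfies conditions 1 and 2. By the theorem's hypothesis it suffices to produce one $c > 0$ for which $\Omega^* \cap \argmin_{x \in A} F(x, \lambda^*, c) \ne \emptyset$. Here I would invoke non-degeneracy to select, along some increasing unbounded sequence $c_n$ exceeding the thresholds from both the non-degeneracy and penalty-type conditions, global minimizers $x_n \in \argmin_{x \in A} F(x, \lambda^*, c_n)$ with $\|x_n\| \le R$. Since $X$ is finite dimensional, the bounded sequence $\{ x_n \}$ has a cluster point $x^*$, which by the penalty-type property must be globally optimal, i.e. $x^* \in \Omega^*$. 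Passing to a subsequence $x_{n_k} \to x^*$, I would then combine the two one-sided estimates available at $x^*$.

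The crux, and the step I expect to be the main obstacle, is upgrading the local minimality of the cluster point to global minimality along the sequence. Local parametric exactness at $x^*$ furnishes a neighbourhood $U$ and a threshold such that $F(x_{n_k}, \lambda^*, c_{n_k}) \ge F(x^*, \lambda^*, c_{n_k})$ once $x_{n_k} \in U \cap A$ and $c_{n_k}$ is large enough (both hold for all large $k$), while global minimality of $x_{n_k}$ on $A$ gives $F(x_{n_k}, \lambda^*, c_{n_k}) \le F(x, \lambda^*, c_{n_k})$ for every $x \in A$. Chaining these inequalities sandwiches $F(x^*, \lambda^*, c_{n_k})$ below $F(x, \lambda^*, c_{n_k})$ for all $x \in A$, so that $x^* \in \argmin_{x \in A} F(x, \lambda^*, c_{n_k})$. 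Together with $x^* \in \Omega^*$ this establishes \eqref{InclImpliesExactness} at $c = c_{n_k}$, and the theorem's hypothesis then delivers global parametric exactness with exact tuning parameter $\lambda^*$, completing the argument.
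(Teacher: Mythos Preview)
Your proposal is correct and follows essentially the same approach as the paper. The only cosmetic difference is that the paper organizes the sufficiency argument as a proof by contradiction (assuming no $c$ makes \eqref{InclImpliesExactness} hold and deriving a contradiction from the inequality $F(x_{n_k},\lambda^*,n_k)\ge F(x^*,\lambda^*,n_k)$), whereas you argue directly by sandwiching $F(x^*,\lambda^*,c_{n_k})$ between the local-exactness lower bound and the global-minimality upper bound; the underlying steps---non-degeneracy to produce a bounded minimizing sequence, compactness to extract a cluster point, the penalty-type property to place it in $\Omega^*$, and local exactness to compare values---are identical.
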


\begin{proof}
Suppose that $F(x, \lambda, c)$ is globally parametrically exact with an exact tuning parameter $\lambda^*$. Then  
for any $c > c^*(\lambda^*)$ one has
$$
  \argmin_{x \in A} F(x, \lambda^*, c) = \Omega^*.
$$
In other words, for any $c > c^*(\lambda^*)$ every globally optimal solution $x^*$ of the problem $(\mathcal{P})$ is a
global (and hence local uniformly with respect to $c \in (c^*(\lambda^*), + \infty)$) minimizer of 
$F(\cdot, \lambda^*, c)$ on the set $A$. Thus, $F(x, \lambda, c)$ is locally parametrically exact with the exact tuning
parameter $\lambda^*$ at every globally optimal solution of the problem $(\mathcal{P})$.

Fix arbitrary $x^* \in \Omega^*$. Then for any $c > c(\lambda^*)$ the point $x^*$ is a global minimizer of 
$F(\cdot, \lambda^*, c)$, which implies that $F(x, \lambda, c)$ is non-degenerate for $\lambda = \lambda^*$.
Furthermore, if a sequence $\{ x_n \} \subset A$ is such that $x_n \in \argmin_{x \in A} F(x, \lambda^*, c_n)$ for all
$n \in N$, where $c_n \to + \infty$ as $n \to \infty$, then due to the global exactness of $F$ one has that for all $n$
large enough the point $x_n$ coincides with one of the globally optimal solution of $(\mathcal{P})$, which implies that 
$x_n \in \Omega$, and $f(x_n) = \min_{x \in \Omega} f(x)$. Hence applying the facts that $\Omega$ is closed and $f$ is
l.s.c. on $\Omega$ one can easily verify that a cluster point of the sequence $\{ x_n \}$, if exists, is a globally
optimal solution of $(\mathcal{P})$. Thus, $F(x, \lambda, c)$ is a penalty-type separating function 
for $\lambda = \lambda^*$.

Let us prove the converse statement. Our aim is to verify that there exist $c > 0$ and $x^* \in \Omega^*$ such that
\begin{equation} \label{EqualityImplExactness}
  \inf_{x \in A} F(x, \lambda^*, c) = F(x^*, \lambda^*, c). 
\end{equation}
Then taking into account condition \eqref{InclImpliesExactness} one obtains that the separating function 
$F(x, \lambda, c)$ is globally parametrically exact. Arguing by reductio ad absurdum, suppose that
\eqref{EqualityImplExactness} is not valid. Then, in particular, for any $n \in \mathbb{N}$ one has
\begin{equation} \label{ParamExactAdAbsurdum}
  \inf_{x \in A} F(x, \lambda^*, n) < F(x^*, \lambda^*, n) \quad \forall x^* \in \Omega^*.
\end{equation}
By condition~1, the function $F(x, \lambda, c)$ is non-degenerate for $\lambda = \lambda^*$. Therefore there exist
$n_0 \in \mathbb{N}$ and $R > 0$ such that for any $n \ge n_0$ there 
exists $x_n \in \argmin_{x \in A} F(x, \lambda^*, n)$ with $\| x_n \| \le R$. 

Recall that $X$ is a finite dimensional normed space. Therefore there exists a subsequence $\{ x_{n_k} \}$ converging to
some $x^*$. Consequently, applying the fact that $F(x, \lambda, c)$ is a penalty-type separating function 
for $\lambda = \lambda^*$ one obtains that $x^* \in \Omega^*$. By condition~2, $F(x, \lambda, c)$ is locally
parametrically exact at $x^*$ with the exact tuning parameter $\lambda^*$. Therefore there exist $c_0 > 0$ and a
neighbourhood $U$ of $x^*$ such that for any $c \ge c_0$ one has 
\begin{equation} \label{LocalParamExactness}
  F(x, \lambda^*, c) \ge F(x^*, \lambda^*, c) \quad \forall x \in U \cap A.
\end{equation}
Since the subsequence $\{ x_{n_k} \}$ converges to $x^*$, there exists $k_0$ such that for any $k \ge k_0$ one has
$x_{n_k} \in U$. Moreover, one can suppose that $n_k \ge c_0$ for all $k \ge k_0$. Hence with the use of
\eqref{LocalParamExactness} one obtains that
$$
  F(x_{n_k}, \lambda^*, n_k) \ge F(x^*, \lambda^*, n_k),
$$
which contradicts \eqref{ParamExactAdAbsurdum} and the fact that $x_{n_k} \in \argmin_{x \in A} F(x, \lambda^*, n_k)$.
Thus, $F(x, \lambda, c)$ is globally parametrically exact.
\end{proof}

\begin{remark} \label{Rmrk_LocPrincipleParamForm}
{(i)~Condition \eqref{InclImpliesExactness} simply means that in order to prove the global
parametric exactness of $F(x, \lambda, c)$ it is sufficient to check that at least one globally optimal solution of 
the problem $(\mathcal{P})$ is a point of global minimum of the function $F(\cdot, \lambda^*, c)$ instead of verifying
that the sets $\Omega^*$ and $\argmin_{x \in A} F(x, \lambda^*, c)$ actually coincide. It should be pointed out that in
most particular cases the validity of condition \eqref{InclImpliesExactness} is equivalent to global parametric
exactness. In fact, the equivalence between \eqref{InclImpliesExactness} and global parametric exactness automatically,
i.e. without any additional assumptions, holds true in all but one example (see subsection~\ref{Example_ALRW} below)
presented in this article. Note, finally, that condition \eqref{InclImpliesExactness} is needed only to prove the ``if''
part of the theorem.
}

\noindent{(ii)~The theorem above describes how to construct a globally exact separating function $F(x, \lambda, c)$.
Namely, one has to ensure that a chosen function $F(x, \lambda, c)$ is of penalty-type (which can be guaranteed by
adding a penalty term to the function $F(x, \lambda, c)$), non-degenerate (which can usually be guaranteed by the
introduction of a barrier term into the function $F(x, \lambda, c)$) and is locally exact near all globally optimal
solutions of the problem $(\mathcal{P})$, which is typically done with the use of constraint qualifications (metric
(sub-)regularity assumptions) and/or sufficient optimality conditions. Below, we present several particular examples
illustrating the usage of the previous theorem.
}

\noindent{(iii)~Note that the previous theorem can be reformulated as a theorem describing necessary and sufficient
conditions for a tuning parameter $\lambda^* \in \Lambda$ to be exact. It should also be mentioned that the theorem
above can be utilized in order to obtain necessary and/or sufficient conditions for the uniqueness of an exact tuning
parameter, In particular, it is easy to see that a globally exact tuning parameter $\lambda^*$ is unique, if there
exists $x^* \in \Omega^*$ such that a locally exact tuning parameter at $x^*$ is unique.
}
\end{remark}

The theorem above can be vaguely formulated as follows. The separating function $F(x, \lambda, c)$ is globally
parametrically exact iff it is of penalty-type, non-degenerate and locally exact at every globally optimal solution
of the problem $(\mathcal{P})$. Thus, under natural assumptions the function $F(x, \lambda, c)$ is globally exact iff
it is exact near globally optimal solutions of the original problem. That is why Theorem~\ref{Thrm_LocPrin_Param} is
called \textit{the localization principle}.

Let us reformulate the localization principle in the form that is slightly more convenient for applications.

\begin{theorem}[Localization Principle in the Parametric Form II] \label{Thrm_LocPrin_Param_SublevelSets}
Suppose that the validity of the condition
$$
  \Omega^* \cap \argmin_{x \in A} F(x, \lambda^*, c) \ne \emptyset
$$
for some $\lambda^* \in \Lambda$ and $c > 0$ implies that $F(x, \lambda, c)$ is globally parametrically exact with the
exact tuning parameter $\lambda^*$.  Let also the sets $A$ and $\Omega$ be closed, the objective function $f$ be l.s.c.
on $\Omega$, and the function $F(\cdot, \lambda, c)$ be l.s.c. on $A$ for all $\lambda \in \Lambda$ and $c > 0$. Then
the separating function $F(x, \lambda, c)$ is globally parametrically exact if and only if there 
exists $\lambda^* \in \Lambda$ such that
\begin{enumerate}
\item{$F(x, \lambda, c)$ is of penalty-type for $\lambda = \lambda^*$;
}

\item{there exist $c_0 > 0$, $x^* \in \Omega^*$ and a bounded set $K \subset A$ such that
\begin{equation} \label{SubleveBoundedness_Param}
  S(c, x^*) := \Big\{ x \in A \mid F(x, \lambda^*, c) < F(x^*, \lambda^*, c) \Big\} \subset K \quad \forall c \ge c_0;
\end{equation}
}

\item{$F(x, \lambda, c)$ is locally parametrically exact at every globally optimal solution of the problem
$(\mathcal{P})$ with the exact tuning parameter $\lambda^*$.
}
\end{enumerate}
\end{theorem}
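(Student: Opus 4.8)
The plan is to derive Theorem~\ref{Thrm_LocPrin_Param_SublevelSets} from the already-established Theorem~\ref{Thrm_LocPrin_Param}. The two theorems share the hypothesis built on condition~\eqref{InclImpliesExactness}, the hypotheses that $\Omega$ is closed and $f$ is l.s.c.\ on $\Omega$, and the very same conclusion; they differ only in that the non-degeneracy requirement of Theorem~\ref{Thrm_LocPrin_Param} is traded for the sublevel-set boundedness condition~\eqref{SubleveBoundedness_Param} plus the extra hypotheses that $A$ is closed and $F(\cdot, \lambda, c)$ is l.s.c.\ on $A$. Consequently the whole argument reduces to showing that, under these added hypotheses, condition~\eqref{SubleveBoundedness_Param} and non-degeneracy are interchangeable. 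The penalty-type property (condition~1) and local parametric exactness (condition~3) appear verbatim in both statements and need no further comment.

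For the ``if'' direction I would assume conditions 1--3 and first prove non-degeneracy for $\lambda = \lambda^*$, after which Theorem~\ref{Thrm_LocPrin_Param} delivers global parametric exactness immediately. Fix $x^* \in \Omega^*$ together with $c_0$ and the bounded set $K$ supplied by~\eqref{SubleveBoundedness_Param}. Since $A$ is closed and $X$ is finite dimensional, $\cl K$ is a compact subset of $A$; put $R := \max\{ \sup_{x \in K} \|x\|, \|x^*\| \}$, which is finite. Now fix $c \ge c_0$ and split into two cases. If $\inf_{x \in A} F(x, \lambda^*, c) \ge F(x^*, \lambda^*, c)$, then $x^*$ is itself a global minimizer of $F(\cdot, \lambda^*, c)$ on $A$ with $\|x^*\| \le R$. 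Otherwise $m := \inf_{x \in A} F(x, \lambda^*, c) < F(x^*, \lambda^*, c)$, so that any minimizing sequence $\{ x_j \} \subset A$ satisfies $F(x_j, \lambda^*, c) < F(x^*, \lambda^*, c)$ for all large $j$, whence $x_j \in S(c, x^*) \subset K$ eventually; passing to a convergent subsequence $x_j \to \overline{x} \in \cl K \subset A$ and invoking the lower semicontinuity of $F(\cdot, \lambda^*, c)$ gives $F(\overline{x}, \lambda^*, c) \le \liminf_j F(x_j, \lambda^*, c) = m$, so $\overline{x}$ is a global minimizer with $\|\overline{x}\| \le R$. In either case $F(\cdot, \lambda^*, c)$ attains its global minimum on $A$ at a point of norm at most $R$ uniformly in $c \ge c_0$, which is precisely non-degeneracy for $\lambda = \lambda^*$.

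For the ``only if'' direction I would simply apply the corresponding implication of Theorem~\ref{Thrm_LocPrin_Param}: global parametric exactness already furnishes a $\lambda^*$ for which $F$ is of penalty-type and locally parametrically exact at every point of $\Omega^*$, which is exactly conditions 1 and 3. It then remains to produce~\eqref{SubleveBoundedness_Param}, and here I would exploit that for every $c > c^*(\lambda^*)$ one has $\argmin_{x \in A} F(x, \lambda^*, c) = \Omega^*$. Hence a fixed $x^* \in \Omega^*$ attains the global minimum of $F(\cdot, \lambda^*, c)$, the strict sublevel set $S(c, x^*)$ is empty, and~\eqref{SubleveBoundedness_Param} holds with this $x^*$, any $c_0 > c^*(\lambda^*)$, and the bounded set $K = \{ x^* \}$.

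The step I expect to be the crux is the case analysis establishing non-degeneracy. The delicacy is that~\eqref{SubleveBoundedness_Param} controls only the \emph{strict} sublevel sets, so one must first compare with $F(x^*, \lambda^*, c)$ to rule out that a minimizing sequence escapes to infinity, and only afterwards may the compactness of $\cl K$ and the lower semicontinuity of $F(\cdot, \lambda^*, c)$ be combined to force attainment of the infimum inside a fixed ball. Securing the two cases and the finiteness of $R$ \emph{uniformly} in $c \ge c_0$ is exactly where the closedness of $A$ and the l.s.c.\ hypothesis are genuinely needed.
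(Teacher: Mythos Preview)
Your proposal is correct and follows essentially the same approach as the paper: both directions are reduced to Theorem~\ref{Thrm_LocPrin_Param}, with the ``if'' part established by showing that the sublevel-boundedness condition~\eqref{SubleveBoundedness_Param} together with closedness of $A$ and lower semicontinuity of $F(\cdot,\lambda^*,c)$ yields non-degeneracy, and the ``only if'' part by observing that $S(c,x^*)=\emptyset$ once $c>c^*(\lambda^*)$. Your treatment is in fact slightly more careful than the paper's in two places: you make the case split explicit and use a minimizing-sequence/compactness argument where the paper simply asserts attainment, and you correctly include $\|x^*\|$ in the definition of $R$ to cover the case $S(c,x^*)=\emptyset$.
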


\begin{proof}
Suppose that $F(x, \lambda, c)$ is globally parametrically exact with the exact tuning parameter $\lambda^*$. Then, as
it was proved in Theorem~\ref{Thrm_LocPrin_Param}, $F(x, \lambda, c)$ is a penalty-type separating function 
for $\lambda = \lambda^*$, and $F(x, \lambda, c)$ is locally parametrically exact with the exact tuning parameter
$\lambda^*$ at every globally optimal solution of the problem $(\mathcal{P})$. Furthermore, from the definition of
global exactness it follows that $S(c, x^*) = \emptyset$ for all $c > c^*(\lambda^*)$ and $x^* \in \Omega^*$, which
implies that \eqref{SubleveBoundedness_Param} is satisfied for all $c_0 > c^*(\lambda^*)$, $x^* \in \Omega^*$ and any
bounded set $K$.

Let us prove the converse statement. By our assumption there exist $c_0 > 0$ and $x^* \in \Omega^*$ such that for all
$c \ge c_0$ the sublevel set $S(c, x^*)$ is contained in a bounded set $K$ and, thus, is bounded. Therefore taking into
account the facts that the function $F(\cdot, \lambda^*, c)$ is l.s.c. on $A$, and the set $A$ is closed one obtains
that $F(\cdot, \lambda^*, c)$ attains a global minimum on the set $A$ at a point $x(c) \in K$ 
(if $S(c, x^*) = \emptyset$ for some $c \ge c_0$, then $x(c) = x^*$). From the fact that $K$ is bounded it follows that
that there exists $R > 0$ such that $\| x(c) \| \le R$ for all $c \ge c_0$, which implies that $F(x, \lambda, c)$ is
non-degenerate for $\lambda = \lambda^*$. Consequently, applying Theorem~\ref{Thrm_LocPrin_Param} one obtains the
desired result.
\end{proof}

Note that the definition of global parametric exactness does not specify how the optimal value of the problem
$(\mathcal{P})$ and the infimum of the function $F(\cdot, \lambda^*, c)$ over the set $A$ are connected. In some
particular cases (see subsection~\ref{Example_ALRW} below), this fact might significantly complicate the application of
the localization principle. Therefore, let us show how one can incorporate the assumption on the value of
$\inf_{x \in A} F(x, \lambda^*, c)$ into the localization principle.

\begin{definition}
The separating function $F(x, \lambda, c)$ is said to be \textit{strictly globally parametrically exact} if 
$F(x, \lambda, c)$ is globally parametrically exact, and there exists $c_0 > 0$ such that
\begin{equation} \label{StrictParamExactness}
  \inf_{x \in A} F(x, \lambda^*, c) = f^* \quad \forall c \ge c_0,
\end{equation}
where $\lambda^*$ is an exact tuning parameter, and $f^* = \inf_{x \in \Omega} f(x)$ is the optimal value of 
the problem $(\mathcal{P})$. An exact tuning parameter satisfying \eqref{StrictParamExactness} is called
\textit{strictly exact}.
\end{definition}

Arguing in a similar way to the proofs of Theorems~\ref{Thrm_LocPrin_Param} and \ref{Thrm_LocPrin_Param_SublevelSets}
one can easily extend the localization principle to the case of strict exactness.

\begin{theorem}[Strengthened Localization Principle in the Parametric Form I] \label{Thrm_LocPrin_Param_Strict}
Suppose that the validity of the conditions
\begin{equation} \label{InclImpliesStrictExactness}
  \Omega^* \cap \argmin_{x \in A} F(x, \lambda^*, c) \ne \emptyset, \quad \min_{x \in A} F(x, \lambda^*, c) = f^*
\end{equation}
for some $\lambda^* \in \Lambda$ and $c > 0$ implies that $F(x, \lambda, c)$ is strictly globally parametrically exact
with $\lambda^*$ being a strictly exact tuning parameter. Let also $\Omega$ be closed, and $f$ be l.s.c. on $\Omega$.
Then the separating function $F(x, \lambda, c)$ is strictly globally parametrically exact if and only if there exists 
$\lambda^* \in \Lambda$ such that 
\begin{enumerate}
\item{$F(x, \lambda, c)$ is of penalty-type and non-degenerate for $\lambda = \lambda^*$;}

\item{$F(x, \lambda, c)$ is locally parametrically exact at every globally optimal solution of the problem
$(\mathcal{P})$ with the exact tuning parameter $\lambda^*$;
}

\item{there exists $c_0 > 0$ such that $F(x^*, \lambda^*, c) = f^*$ for all $x^* \in \Omega^*$ and $c \ge c_0$.}
\end{enumerate}
\end{theorem}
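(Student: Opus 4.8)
The plan is to mirror the two-directional argument of Theorem~\ref{Thrm_LocPrin_Param}, inserting the extra bookkeeping needed to handle the value equality $\inf_{x \in A} F(x, \lambda^*, c) = f^*$. For necessity, suppose $F(x, \lambda, c)$ is strictly globally parametrically exact with strictly exact tuning parameter $\lambda^*$. Since strict exactness entails ordinary global parametric exactness, the necessity argument of Theorem~\ref{Thrm_LocPrin_Param} applies verbatim and yields conditions~1 and~2. For condition~3 I would combine the defining equality \eqref{StrictParamExactness}, valid for $c \ge c_0$, with global exactness, which guarantees that every $x^* \in \Omega^*$ is a global minimizer of $F(\cdot, \lambda^*, c)$ for $c > c^*(\lambda^*)$; hence $F(x^*, \lambda^*, c) = \inf_{x \in A} F(x, \lambda^*, c) = f^*$ for all $x^* \in \Omega^*$ whenever $c$ exceeds both $c_0$ and $c^*(\lambda^*)$, which is precisely condition~3.

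For sufficiency, the key observation is that conditions~1 and~2 alone suffice to reproduce the reductio ad absurdum of Theorem~\ref{Thrm_LocPrin_Param} and locate a global minimizer lying in $\Omega^*$, after which condition~3 upgrades this to the full hypothesis \eqref{InclImpliesStrictExactness}. Concretely, I would argue by contradiction, assuming that $\inf_{x \in A} F(x, \lambda^*, c) < F(x^*, \lambda^*, c)$ for every $c \ge c_0$ and every $x^* \in \Omega^*$. Restricting to integers $n \ge c_0$, non-degeneracy produces $x_n \in \argmin_{x \in A} F(x, \lambda^*, n)$ with $\| x_n \| \le R$; finite dimensionality of $X$ yields a subsequence $x_{n_k} \to \widehat{x}$, and the penalty-type property forces $\widehat{x} \in \Omega^*$. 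Local parametric exactness at $\widehat{x}$ then supplies a neighbourhood $U$ and a threshold so that $F(x_{n_k}, \lambda^*, n_k) \ge F(\widehat{x}, \lambda^*, n_k)$ for all large $k$, contradicting the facts that $x_{n_k}$ minimizes $F(\cdot, \lambda^*, n_k)$ on $A$ and that $\inf_{x \in A} F(x, \lambda^*, n_k) < F(\widehat{x}, \lambda^*, n_k)$.

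It follows that the reductio assumption fails, so there exist $c \ge c_0$ and $x^* \in \Omega^*$ with $\inf_{x \in A} F(x, \lambda^*, c) \ge F(x^*, \lambda^*, c)$; as $x^* \in \Omega \subset A$ the reverse inequality is automatic, giving $\inf_{x \in A} F(x, \lambda^*, c) = F(x^*, \lambda^*, c)$ and hence the first relation in \eqref{InclImpliesStrictExactness}. Since $c \ge c_0$, condition~3 gives $F(x^*, \lambda^*, c) = f^*$, whence $\min_{x \in A} F(x, \lambda^*, c) = f^*$, which is the second relation; the standing hypothesis then delivers strict global parametric exactness with $\lambda^*$ strictly exact. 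I expect the only delicate point to be the discipline of keeping every penalty parameter appearing in the reductio at least $c_0$, so that condition~3 is available precisely when it is needed to convert the global-minimizer statement into the value equality $\min_{x \in A} F(x, \lambda^*, c) = f^*$; the remainder is essentially a transcription of the proof of Theorem~\ref{Thrm_LocPrin_Param}.
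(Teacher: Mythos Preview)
Your proposal is correct and follows essentially the same route as the paper, which simply states that one argues ``in a similar way'' to the proof of Theorem~\ref{Thrm_LocPrin_Param}. The only cosmetic imprecision is that non-degeneracy carries its own threshold, so in the reductio you should take integers $n$ above both that threshold and the $c_0$ from condition~3; you already flagged this bookkeeping point yourself.
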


\begin{theorem}[Strengthened Localization Principle in the Parametric Form II]
\label{Thrm_LocPrin_Param_Strict_Sublevel}
Suppose that the validity of the conditions
$$
  \Omega^* \cap \argmin_{x \in A} F(x, \lambda^*, c) \ne \emptyset, \quad \min_{x \in A} F(x, \lambda^*, c) = f^*
$$
for some $\lambda^* \in \Lambda$ and $c > 0$ implies that $F(x, \lambda, c)$ is strictly globally parametrically exact
with $\lambda^*$ being a strictly exact tuning parameter. Let also the sets $A$ and $\Omega$ be closed, the objective
function $f$ be l.s.c. on $\Omega$, and the function $F(\cdot, \lambda, c)$ be l.s.c. on $A$ for 
all $\lambda \in \Lambda$ and $c > 0$. Then the separating function $F(x, \lambda, c)$ is strictly globally
parametrically exact if and only if there exist $\lambda^* \in \Lambda$ and $c_0 > 0$ such that 
\begin{enumerate}
\item{$F(x, \lambda, c)$ is of penalty-type for $\lambda = \lambda^*$;}

\item{there exists a bounded set $K$ such that
$$
  \Big\{ x \in A \Bigm| F(x, \lambda^*, c) < f^* \Big\} \subset K \quad \forall c \ge c_0;
$$
}

\item{$F(x, \lambda, c)$ is locally parametrically exact with the exact tuning parameter $\lambda^*$ at every globally
optimal solution of the problem $(\mathcal{P})$;
}

\item{$F(x^*, \lambda^*, c) = f^*$ for all $x^* \in \Omega^*$ and $c \ge c_0$.}
\end{enumerate}
\end{theorem}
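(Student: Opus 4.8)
The plan is to mirror the proof of Theorem~\ref{Thrm_LocPrin_Param_SublevelSets}, using it as a template, but to reduce everything to the \emph{strengthened} principle, Theorem~\ref{Thrm_LocPrin_Param_Strict}, rather than to Theorem~\ref{Thrm_LocPrin_Param}. Since conditions~1, 3 and 4 of the present statement coincide (up to relabelling) with the hypotheses of Theorem~\ref{Thrm_LocPrin_Param_Strict} --- namely penalty-type, local parametric exactness at every point of $\Omega^*$, and the value normalization $F(x^*,\lambda^*,c)=f^*$ --- the only genuinely new task on the sufficiency side will be to extract the \emph{non-degeneracy} of $F$ for $\lambda=\lambda^*$ from the sublevel-boundedness condition~2 together with the lower semicontinuity of $F(\cdot,\lambda,c)$ and the closedness of $A$. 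Once non-degeneracy is in hand, Theorem~\ref{Thrm_LocPrin_Param_Strict} applies directly (its standing implication hypothesis \eqref{InclImpliesStrictExactness} is assumed here as well) and yields strict global parametric exactness.

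For the necessity direction I would argue exactly as in the first paragraphs of the proofs of Theorems~\ref{Thrm_LocPrin_Param} and \ref{Thrm_LocPrin_Param_SublevelSets}. If $F$ is strictly globally parametrically exact with exact tuning parameter $\lambda^*$, then $\argmin_{x\in A}F(x,\lambda^*,c)=\Omega^*$ and $\inf_{x\in A}F(x,\lambda^*,c)=f^*$ for all $c>c^*(\lambda^*)$; the first equality gives penalty-type (condition~1) and local parametric exactness (condition~3) just as before, while it also forces $F(x^*,\lambda^*,c)=f^*$ for every $x^*\in\Omega^*$ and all such $c$, which is condition~4. Finally, because the infimum equals $f^*$ and is attained, the strict sublevel set $\{x\in A\mid F(x,\lambda^*,c)<f^*\}$ is empty for all such $c$, so condition~2 holds with $c_0>c^*(\lambda^*)$ and any bounded $K$.

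The crux is the sufficiency direction, specifically the derivation of non-degeneracy. Fix some $x^*\in\Omega^*$ (nonempty by the standing assumption) together with the constants $c_0>0$ and the bounded set $K$ furnished by condition~2. For $c\ge c_0$ I would distinguish two cases. If the strict sublevel set $S(c):=\{x\in A\mid F(x,\lambda^*,c)<f^*\}$ is empty, then by condition~4 one has $F(x,\lambda^*,c)\ge f^*=F(x^*,\lambda^*,c)$ for all $x\in A$, so $x^*$ itself is a global minimizer. If $S(c)\ne\emptyset$, I would first observe that $\inf_{x\in A}F(x,\lambda^*,c)=\inf_{x\in S(c)}F(x,\lambda^*,c)$, since every point outside $S(c)$ has value $\ge f^*$ while some point of $S(c)$ has value $<f^*$. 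Because $S(c)\subset K$ is bounded and $A$ is closed, the closure $\overline{S(c)}$ is a compact subset of $A$ (finite dimensionality of $X$), on which the l.s.c.\ function $F(\cdot,\lambda^*,c)$ attains its minimum at some $x(c)\in\overline{S(c)}$; a short squeezing argument $\min_{\overline{S(c)}}F\le\inf_{S(c)}F=\inf_A F\le F(x(c),\lambda^*,c)=\min_{\overline{S(c)}}F$ then shows that $x(c)$ is a global minimizer on $A$. In either case the minimizer lies in $\overline{K}\cup\{x^*\}$, a bounded set, giving a uniform bound $\|x(c)\|\le R$ independent of $c\ge c_0$; this is precisely non-degeneracy for $\lambda=\lambda^*$.

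With non-degeneracy established, I would simply invoke Theorem~\ref{Thrm_LocPrin_Param_Strict}: its condition~1 (penalty-type \emph{and} non-degenerate), condition~2 and condition~3 are supplied by the present conditions~1, 3 and 4 respectively, and its hypothesis on \eqref{InclImpliesStrictExactness} is assumed, so $F$ is strictly globally parametrically exact. The main obstacle I anticipate is purely in this attainment/boundedness step: one must be careful that the global infimum over all of $A$ --- not merely over the bounded sublevel set --- is attained at $x(c)\in\overline{S(c)}$, which requires both the reduction $\inf_A=\inf_{S(c)}$ and the lower semicontinuity on the compact set $\overline{S(c)}$, and the empty-sublevel case must be handled separately so that condition~4 supplies the minimizer $x^*$. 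Everything else is a transcription of the earlier proofs.
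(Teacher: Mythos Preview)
Your proposal is correct and follows exactly the approach the paper intends: the paper gives no separate proof for this theorem but states that one argues ``in a similar way to the proofs of Theorems~\ref{Thrm_LocPrin_Param} and \ref{Thrm_LocPrin_Param_SublevelSets},'' and you have faithfully carried this out by deriving non-degeneracy from the sublevel-boundedness condition (via the same case split and l.s.c./compactness argument as in Theorem~\ref{Thrm_LocPrin_Param_SublevelSets}, noting that condition~4 makes the sublevel sets $\{F<f^*\}$ and $\{F<F(x^*,\lambda^*,c)\}$ coincide) and then invoking Theorem~\ref{Thrm_LocPrin_Param_Strict}. Your extra care in the squeezing step $\min_{\overline{S(c)}}F\le\inf_{S(c)}F=\inf_A F\le F(x(c),\lambda^*,c)$ is a welcome elaboration of a point the paper leaves implicit.
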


\section{Applications of the Localization Principle}
\label{Sect_Appl_ParametricExactness}

Below, we provide several examples demonstrating how one can apply the localization principle in the parametric form
to the study of the global exactness of various penalty and augmented Lagrangian functions.

\subsection{Example I: Linear Penalty Functions}

We start with the simplest case when the function $F(x, \lambda, c)$ is affine with respect to the penalty parameter $c$
and does not depend on any additional parameters. Let a function $\varphi \colon X \to [0, +\infty]$ be such that
$\varphi(x) = 0$ iff $x \in M$. Define
$$
  F(x, c) = f(x) + c \varphi(x).
$$
The function $F(x, c)$ is called \textit{a linear penalty function} for the problem $(\mathcal{P})$. 

\begin{remark}
In order to rigorously include linear penalty functions (as well as nonlinear penalty functions from the
following two examples) into the theory of parametrically exact separating functions one has to define $\Lambda$ to be a
one-point set, say $\Lambda = \{ - 1 \}$, introduce a new separating function $\widehat{F}(x, -1, c) \equiv F(x, c)$,
and consider the separating function $\widehat{F}(x, \lambda, c)$ instead of the penalty function $F(x, c)$. However,
since this transformation is purely formal, we omit it for the sake of shortness. Moreover, since in the case of penalty
functions the parameter $\lambda$ is absent, it is natural to omit the term ``parametric'', and say that $F(x, c)$ is
globally/locally exact.
\end{remark}

Let us obtain two simple characterizations of the global exactness of the linear penalty function $F(x, c)$ with the use
of the localization principle (Theorems~\ref{Thrm_LocPrin_Param} and \ref{Thrm_LocPrin_Param_SublevelSets}). These
characterizations were first obtained by the author in (\cite{Dolgopolik_UT}, Therems~3.10 and 3.17).

Before we formulate the main result, let us note that $F(x^*, c) = f^*$ for any globally optimal solution $x^*$ of the
problem $(\mathcal{P})$ and for all $c > 0$. Therefore, in particular, the linear penalty function $F(x, c)$ is
globally parametrically exact iff it is strictly globally parametrically exact.

\begin{theorem}[Localization Principle for Linear Penalty Functions]
Let $A$ and $\Omega$ be closed, and let $f$ and $\varphi$ be l.s.c. on $A$. Then the linear penalty function $F(x, c)$
is globally exact if and only if $F(x, c)$ is locally exact at every globally optimal solution of the problem
$(\mathcal{P})$ and one of the following two conditions is satisfied
\begin{enumerate}
\item{$F$ is non-degenerate;
}

\item{there exists $c_0 > 0$ such that the set $\{ x \in A \mid F(x, c_0) < f^* \}$ is bounded.
}
\end{enumerate}
\end{theorem}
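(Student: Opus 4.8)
The plan is to read the linear penalty function as the separating function $F(x,\lambda,c)=f(x)+c\varphi(x)$ with the formal singleton parameter set $\Lambda=\{-1\}$, and to deduce the statement from the abstract localization principles already proved, namely Theorems~\ref{Thrm_LocPrin_Param} and \ref{Thrm_LocPrin_Param_SublevelSets}. The computation I would keep in the foreground throughout is that $\varphi(x^*)=0$ for every $x^*\in\Omega^*\subseteq\Omega\subseteq M$, so that $F(x^*,c)=f(x^*)=f^*$ for all $c>0$; combined with $\varphi\ge 0$ this also yields the monotonicity identity $F(x,c)=F(x,c_1)+(c-c_1)\varphi(x)\ge F(x,c_1)$ for $c\ge c_1$, which I will exploit repeatedly.

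Before either principle can be invoked I would discharge its two standing requirements for this $F$. The implication \eqref{InclImpliesExactness} is immediate from the monotonicity identity: if some $x^*\in\Omega^*$ lies in $\argmin_{x\in A}F(\cdot,c_1)$, then $\min_{x\in A}F(\cdot,c_1)=f^*$, so $F(\cdot,c_1)\ge f^*$ on $A$; for every $c>c_1$ the monotonicity gives $F(x,c)\ge f^*$ with equality only when $\varphi(x)=0$ (whence $x\in M\cap A=\Omega$) and $f(x)=f^*$, and therefore $\argmin_{x\in A}F(\cdot,c)=\Omega^*$, i.e. $F$ is globally exact. The one genuinely new estimate is that $F$ is \emph{always} of penalty-type under the stated hypotheses, and I expect this to be the main obstacle, since it is the only place where coercivity of the penalty term and both lower semicontinuity assumptions are used at once. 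Given an increasing unbounded $\{c_n\}$, minimizers $x_n\in\argmin_{x\in A}F(\cdot,c_n)$, and a cluster point $x^*$ of $\{x_n\}$, comparison with a fixed $\overline{x}\in\Omega^*$ yields $f(x_n)+c_n\varphi(x_n)=F(x_n,c_n)\le F(\overline{x},c_n)=f^*$. Since $A$ is closed, $x^*\in A$, so lower semicontinuity of $f$ bounds $f(x_{n_k})$ below along a convergent subsequence; then $\varphi(x_{n_k})\le (f^*-f(x_{n_k}))/c_{n_k}\to 0$, and lower semicontinuity of $\varphi$ together with $\varphi\ge 0$ gives $\varphi(x^*)=0$, i.e. $x^*\in\Omega$. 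A final application of lower semicontinuity of $f$ gives $f(x^*)\le\liminf_k f(x_{n_k})\le f^*$, whence $x^*\in\Omega^*$.

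With these two facts in hand the remaining work is routine. For the ``if'' direction under condition~1, penalty-type (just proved) and the assumed non-degeneracy are exactly condition~1 of Theorem~\ref{Thrm_LocPrin_Param}, while the assumed local exactness is its condition~2, so global exactness follows. Under condition~2, monotonicity and $F(x^*,c)=f^*$ give $S(c,x^*)=\{x\in A\mid F(x,c)<f^*\}\subseteq\{x\in A\mid F(x,c_0)<f^*\}$ for all $c\ge c_0$, so the assumed boundedness of the latter set verifies \eqref{SubleveBoundedness_Param}; as $F(\cdot,c)=f+c\varphi$ is l.s.c. on the closed set $A$, Theorem~\ref{Thrm_LocPrin_Param_SublevelSets} applies and yields global exactness. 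For the ``only if'' direction, global exactness makes every $x^*\in\Omega^*$ a global, hence local, minimizer of $F(\cdot,c)$ for all large $c$ --- which is local exactness at each such point --- and a single fixed $x^*\in\Omega^*$ serves as a uniformly bounded minimizer, giving non-degeneracy (condition~1); this is precisely the necessity argument already carried out in the proof of Theorem~\ref{Thrm_LocPrin_Param}, so it may be cited rather than repeated.
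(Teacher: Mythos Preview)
Your proposal is correct and follows essentially the same approach as the paper: verify the implication \eqref{InclImpliesExactness} via monotonicity in $c$, show that $F$ is always of penalty-type, and then invoke Theorems~\ref{Thrm_LocPrin_Param} and \ref{Thrm_LocPrin_Param_SublevelSets}. The only cosmetic difference is that the paper obtains $\varphi(x_n)\to 0$ by citing \cite{Dolgopolik_UT}, Proposition~3.5, whereas you give the self-contained estimate $\varphi(x_{n_k})\le (f^*-f(x_{n_k}))/c_{n_k}\to 0$ directly.
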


\begin{proof}
Note that $F(x^*, c) = f(x^*) = f^*$ for any $x^* \in \Omega^*$ and $c > 0$. Therefore
$$
  \Omega^* \cap \argmin_{x \in A} F(x, c) \ne \emptyset \implies \Omega^* \subset \argmin_{x \in A} F(x, c).
$$
Note also that if $x \notin M$, then either $F(x, c)$ is strictly increasing in $c$ or $F(x, c) = + \infty$ for all 
$c > 0$. On the other hand, if $x \in M$, then $F(x, c) = f(x)$. Consequently, if for some $c_0 > 0$ one has
$\Omega^* \subset \argmin_{x \in A} F(x, c)$, then for any $c > c_0$ one has $\Omega^* = \argmin_{x \in A} F(x, c)$.
Thus, the validity of the condition $\Omega^* \cap \argmin_{x \in A} F(x, c) \ne \emptyset$ for some $c > 0$ implies 
the global exactness of $F(x, c)$.

Our aim, now, is to verify that $F$ is a penalty-type separating function. Then applying
Theorems~\ref{Thrm_LocPrin_Param} and \ref{Thrm_LocPrin_Param_SublevelSets} one obtains the desired result.

Indeed, let $\{ c_n \} \subset (0, + \infty)$ be an increasing unbounded sequence, $x_n \in \argmin_{x \in A} F(x, c)$
for all $n \in \mathbb{N}$, and let $x^*$ be a cluster point of the sequence $\{ x_n \}$. By \cite{Dolgopolik_UT},
Proposition~3.5, one has $\varphi(x_n) \to 0$ as $n \to \infty$. Hence taking into account the facts that $A$ is closed
and $\varphi$ is l.s.c. on $A$ one gets that $x^* \in A$ and $\varphi(x^*) = 0$. Therefore $x^*$ is a feasible point of
the problem $(\mathcal{P})$.

As it was noted above, for any $y^* \in \Omega^*$ one has $F(y^*, c) = f(y^*)$ for all $c > 0$. Hence taking into
account the definition of $x_n$ and the fact that the function $\varphi$ is non-negative one gets 
that $f(x_n) \le f(y^*)$ for all $n \in \mathbb{N}$. Consequently, with the use of the lower semicontinuity of $f$ one
obtains that $f(x^*) \le f(y^*)$, which implies that $x^*$ is a globally optimal solution of the problem
$(\mathcal{P})$. Thus, $F(x, c)$ is a penalty-type separating function.
\end{proof}

Let us also give a different formulation of the localization principle for linear penalty functions in which the
non-degeneracy condition is replaced by some more widely used conditions.

\begin{corollary}
Let $A$ and $\Omega$ be closed, and let $f$ and $\varphi$ be l.s.c. on $A$. Suppose also that one of the following
conditions is satisfied:
\begin{enumerate}
\item{the set $\{ x \in A \mid f(x) < f^* \}$ is bounded;
}

\item{there exist $c_0 > 0$ and $\delta > 0$ such that the function $F(\cdot, c_0)$ is bounded from below on $A$ and
the set $\{ x \in A \mid f(x) < f^*, \: \varphi(x) < \delta \}$ is bounded;
}

\item{there exist $c_0 > 0$ and a feasible point $x_0$ of the problem $(\mathcal{P})$ such that the set 
$\{ x \in A \mid F(x, c_0) \le f(x_0) \}$ is bounded;
}

\item{the function $f$ is coercive on the set $A$, i.e. $f(x_n) \to + \infty$ as $n \to \infty$ for any sequence 
$\{ x_n \} \subset A$ such that $\| x_n \| \to + \infty$ as $n \to \infty$;
}

\item{there exists $c_0 > 0$ such that the function $F(\cdot, c_0)$ is coercive on the set $A$;
}

\item{the function $\varphi$ is coercive on the set $A$ and there exists $c_0 > 0$ such that the function
$F(\cdot, c_0)$ is bounded from below on $A$.
}
\end{enumerate}
Then $F(x, c)$ is globally exact if and only if it is locally exact at every globally optimal solution of the problem
$(\mathcal{P})$.
\end{corollary}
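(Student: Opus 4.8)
The corollary asserts that each of the six listed conditions implies the non-degeneracy/boundedness hypothesis of the preceding localization-principle theorem for linear penalty functions. Since the previous theorem already reduces global exactness of $F(x,c)=f(x)+c\varphi(x)$ to local exactness plus either non-degeneracy or boundedness of a sublevel set $\{x\in A\mid F(x,c_0)<f^*\}$, my plan is simply to verify that under each of conditions 1--6 one of these two equivalent extra hypotheses holds. Because the previous theorem is an ``if and only if'', the reverse implication (global exactness $\Rightarrow$ local exactness) is free; so the entire content here is showing that conditions 1--6 each deliver the required boundedness.

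**Main line of argument.** The cleanest target is condition 2 of the previous theorem: boundedness of $\{x\in A\mid F(x,c_0)<f^*\}$ for some $c_0>0$. I would establish each case by relating this sublevel set to the sets named in the hypotheses, using the two structural facts already exploited in the theorem's proof: that $\varphi\ge 0$ (so $F(x,c)\ge f(x)$ always, and $F(x,c)=f(x)$ when $x\in M$), and that $F(x,c)$ is nondecreasing in $c$ on points with $\varphi(x)>0$. For condition 1, I would note $\{x\in A\mid F(x,c_0)<f^*\}\subset\{x\in A\mid f(x)<f^*\}$ since $F\ge f$, so boundedness of the latter gives boundedness of the former for every $c_0$. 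For condition 3, the set $\{x\in A\mid F(x,c_0)\le f(x_0)\}$ with $x_0$ feasible contains $\{x\in A\mid F(x,c_0)<f^*\}$ once one observes $f(x_0)\ge f^*$ (indeed $f(x_0)\ge f^*$ because $x_0$ is feasible and $f^*$ is the optimal value), so its boundedness suffices. For conditions 4 and 5, coercivity of $f$ (resp.\ of $F(\cdot,c_0)$) on $A$ forces every sublevel set to be bounded directly, and since $F(\cdot,c_0)\ge f$, coercivity of $f$ on $A$ also yields coercivity of $F(\cdot,c_0)$; either way the relevant sublevel set is bounded.

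**The two delicate cases.** Conditions 2 and 6 require a splitting argument and are where the real work lies. For condition 6, coercivity of $\varphi$ on $A$ together with lower-boundedness of $F(\cdot,c_0)$ on $A$ must be combined: if $\{x_n\}\subset A$ with $\|x_n\|\to\infty$ and $F(x_n,c)<f^*$, then for $c>c_0$ I would write $F(x_n,c)=F(x_n,c_0)+(c-c_0)\varphi(x_n)$, and since $F(\cdot,c_0)$ is bounded below while $(c-c_0)\varphi(x_n)\to+\infty$ by coercivity of $\varphi$, we get $F(x_n,c)\to+\infty$, contradicting the bound; hence the sublevel set at a suitably large $c$ is bounded. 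For condition 2, the hypothesis only controls the set where $\varphi(x)<\delta$, so I would split $\{x\in A\mid F(x,c)<f^*\}$ into the part with $\varphi(x)<\delta$ (bounded by assumption) and the part with $\varphi(x)\ge\delta$; on the latter, using $F(\cdot,c_0)$ bounded below by some $-b$ and writing $F(x,c)\ge -b+(c-c_0)\delta$, one sees that for $c$ large enough this exceeds $f^*$, so that part of the sublevel set is empty. This $\delta$-splitting is the step I expect to be the main obstacle, since it requires choosing $c$ large relative to both the lower bound $b$ and the gap $\delta$, and care must be taken that the resulting boundedness holds for all $c$ beyond some threshold rather than merely for one value; I would handle this by fixing the large $c$ yielding emptiness of the high-$\varphi$ part and noting the low-$\varphi$ part stays inside the fixed bounded set for all larger $c$ as well, since increasing $c$ only shrinks the sublevel set.
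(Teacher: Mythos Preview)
Your proposal is correct and follows exactly the route the paper takes: the paper's proof is a one-line assertion that each of the six conditions forces boundedness of $\{x\in A\mid F(x,c_0)<f^*\}$ for some $c_0>0$, after which the preceding localization-principle theorem applies. You have simply supplied the case-by-case verification that the paper leaves to the reader, and each case is handled correctly; in particular your $\delta$-splitting for condition~2 and the decomposition $F(x,c)=F(x,c_0)+(c-c_0)\varphi(x)$ for conditions~2 and~6 are the natural arguments. Your final worry about needing the bound for all $c$ beyond a threshold is unnecessary here, since the linear-penalty version of the theorem only asks for a single $c_0$ (and monotonicity of $c\mapsto F(x,c)$ propagates it anyway), but your handling of it is harmless.
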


\begin{proof}
One can easily verify that if one of the above assumptions holds true, then the set $\{ x \in A \mid F(x, c_0) < f^* \}$
is bounded for some $c_0 > 0$. Then applying the localization principle for linear penalty functions one obtains the
desired result.
\end{proof}

\begin{remark}
The corollary above provides an example of how  one can reformulate the localization principle in a particular case
with the use of some well-known and widely used conditions such as coercivity or the boundedness of a certain sublevel
set. For the sake of shortness, we do not provide such reformulations of the localization principle for particular
separating function $F(x, \lambda, c)$ studied below. However, let us underline that one can easily reformulate the
localization principle with the use of coercivity-type assumptions in any particular case.
\end{remark}

For the sake of completeness, let us also formulate simple sufficient conditions for the local exactness of the
function $F$. These conditions are well-known (see, e.g. \cite{Dolgopolik_UT}, Theorem~2.4 and Proposition~2.7) and
rely on an error bound for the penalty term $\varphi$.

\begin{proposition}
Let $x^*$ be a locally optimal solution of the problem $(\mathcal{P})$, and $f$ be H\"{o}lder continuous with exponent
$\alpha \in (0, 1]$ in a neighbourhood of $x^*$. Suppose also that there exist $\tau > 0$ and $r > 0$ such that
$$
  \varphi(x) \ge \tau \big[ \dist(x, \Omega) \big]^{\alpha} \quad \forall x \in A \colon \| x - x^* \| < r,
$$
where $\dist(x, \Omega) = \inf_{y \in \Omega} \| x - y \|$. Then the linear penalty function $F(x, c)$ is locally exact 
at $x^*$.
\end{proposition}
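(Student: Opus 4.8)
The plan is to exploit the local optimality of $x^*$ together with the Hölder estimate and the error bound so as to show that, near $x^*$, the penalty term $c\varphi(x)$ dominates any possible drop of $f$ below the value $f(x^*)$. First I would record that since $x^*$ is feasible we have $x^* \in M$, hence $\varphi(x^*) = 0$ and $F(x^*, c) = f(x^*)$ for every $c > 0$. Local exactness at $x^*$ therefore amounts to producing $c^* > 0$ and a neighbourhood $U$ of $x^*$ with $F(x, c) \ge f(x^*)$ for all $x \in U \cap A$ and all $c \ge c^*$.

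Next I would fix the relevant constants. Let $L > 0$ and $\rho > 0$ satisfy $|f(u) - f(v)| \le L\|u - v\|^{\alpha}$ whenever $\|u - x^*\| < \rho$ and $\|v - x^*\| < \rho$, and let $r_0 > 0$ be a radius on which $x^*$ minimizes $f$ over $\Omega$. I would then choose $\delta > 0$ so small that $3\delta < \min\{r, \rho, r_0\}$ and set $U = \{x : \|x - x^*\| < \delta\}$. Fix $x \in U \cap A$. Because $x^* \in \Omega$, we have $\dist(x, \Omega) \le \|x - x^*\| < \delta$, so for each $\varepsilon \in (0, \delta)$ there is a point $y \in \Omega$ with $\|x - y\| \le \dist(x, \Omega) + \varepsilon < 2\delta$, whence $\|y - x^*\| < 3\delta$. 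On this point the three ingredients combine: local optimality gives $f(y) \ge f(x^*)$ (as $y \in \Omega$ lies within the optimality radius), Hölder continuity gives $f(x) \ge f(y) - L\|x - y\|^{\alpha}$, and letting $\varepsilon \downarrow 0$ yields $f(x) \ge f(x^*) - L[\dist(x, \Omega)]^{\alpha}$. Invoking the error bound $\varphi(x) \ge \tau[\dist(x, \Omega)]^{\alpha}$ then gives
$$
  F(x, c) - f(x^*) \ge c\varphi(x) - L[\dist(x, \Omega)]^{\alpha} \ge (c\tau - L)[\dist(x, \Omega)]^{\alpha},
$$
which is nonnegative as soon as $c \ge L/\tau$ (the case $\varphi(x) = +\infty$ being trivial). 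Thus $c^* = L/\tau$ together with the neighbourhood $U$ witnesses the local exactness of $F$ at $x^*$.

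The only point requiring genuine care is that $\Omega$ is not assumed closed here, so $\dist(x, \Omega)$ may fail to be attained; this is exactly why I would work throughout with an approximate nearest feasible point $y$ obeying $\|x - y\| \le \dist(x, \Omega) + \varepsilon$ and pass to the limit $\varepsilon \downarrow 0$ at the end, rather than with an exact metric projection. The remaining bookkeeping — arranging that $x$, $y$ and $x^*$ all lie within the common radius on which Hölder continuity, local optimality, and the error bound hold simultaneously — is routine, and the factor $3$ in the constraint $3\delta < \min\{r, \rho, r_0\}$ is precisely what keeps the auxiliary point $y$ inside every relevant neighbourhood.
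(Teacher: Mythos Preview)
Your proof is correct. The paper does not actually give a proof of this proposition; it merely states the result as well-known and cites \cite{Dolgopolik_UT}, Theorem~2.4 and Proposition~2.7. The key intermediate estimate you derive, namely $f(x) \ge f(x^*) - L[\dist(x,\Omega)]^{\alpha}$ for $x \in A$ near $x^*$, is precisely the content of the cited Proposition~2.7 (the paper invokes it verbatim in the proof of the subsequent theorem on nonlinear penalty functions), and combining it with the error bound to conclude is exactly the intended argument. Your care with the approximate nearest feasible point to avoid assuming $\Omega$ closed is a nice touch that the paper's surrounding context does not make explicit.
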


\subsection{Example II: Nonlinear Penalty Functions}

Let a function $\varphi \colon X \to [0, +\infty]$ be as above. For the sake of convenience, suppose that the objective
function $f$ is non-negative on $X$. From the theoretical point of view this assumption is not restrictive, since one
can always replace the function $f$ with the function $e^{f(\cdot)}$. Furthermore, it should be noted that the
non-negativity assumption on the objective function $f$ is standard in the theory of nonlinear penalty functions 
(cf.~\cite{RubinovGloverYang1999,RubinovYangBagirov2002,RubinovGasimov2003,RubinovYang2003,
YangHuang_NonlinearPenalty2003}).

Let a function $Q \colon [0, + \infty]^2 \to (- \infty, + \infty]$ be fixed. Suppose that the restriction of $Q$ to the
set $[0, + \infty)^2$ is strictly monotone, i.e. $Q(t_1, s_1) < Q(t_2, s_2)$ for any 
$(t_1, s_1), (t_2, s_2) \in [0, + \infty)^2$ such that $t_1 \le t_2$, $s_1 \le s_2$ and $(t_1, s_1) \ne (t_2, s_2)$.
Suppose also that $Q(+ \infty, s) = Q(t, + \infty) = + \infty$ for all $t, s \in [0, + \infty]$.

Define
$$
  F(x, c) = Q\big( f(x), c \varphi(x) \big).
$$
Then $F(x, c)$ is \textit{a nonlinear penalty function} for the problem $(\mathcal{P})$. This type of nonlinear penalty
functions was studied in
\cite{RubinovGloverYang1999,RubinovYangBagirov2002,RubinovGasimov2003,RubinovYang2003,YangHuang_NonlinearPenalty2003}. 

The simplest particular example of nonlinear penalty function is the function $F(x, c)$ of the form
\begin{equation} \label{NonlinearPenFunc}
  F(x, c) = \Big( \big( f(x) \big)^q + \big( c \varphi(x) \big)^q \Big)^{\frac1q}
\end{equation}
with $q > 0$. Here 
$$
  Q(t, s) = \Big( t^q + s^q \Big)^{\frac1q}.
$$
Clearly, this function is monotone. In this article, the function \eqref{NonlinearPenFunc} is called \textit{the}
$q$-\textit{th order nonlinear penalty function} for the problem $(\mathcal{P})$. Let us note that the least exact
penalty parameter of the $q$-th order nonlinear penalty function is often smaller than the least exact penalty parameter
of the linear penalty function $f(x) + c \varphi(x)$ (see~\cite{RubinovYangBagirov2002,RubinovYang2003} for more
details).

Let us obtain a \textit{new} simple characterization of global exactness of the nonlinear penalty function $F(x, c)$,
which does not rely on any assumptions on the perturbation function for the problem $(\mathcal{P})$
(cf.~\cite{RubinovYangBagirov2002,RubinovYang2003}). Furthermore, to the best of author's knowledge, \textit{exact}
nonlinear penalty functions has only been considered for mathematical programming problems, while our results are
applicable in the general case.

\begin{theorem}[Localization Principle for Nonlinear Penalty Functions]
Let the set $A$ be closed, and the functions $f$, $\varphi$ and $F(\cdot, c)$ be l.s.c. on the set $A$. Suppose also
that $Q(0, s) \to + \infty$ as $s \to +\infty$. Then the nonlinear penalty function $F(x, c)$ is globally exact if and
only if it is locally exact at every globally optimal solution of the problem $(\mathcal{P})$ and one of the two
following assumptions is satisfied:
\begin{enumerate}
\item{the function $F(x, c)$ is non-degenerate;
}

\item{there exists $c_0 > 0$ such that the set $\{ x \in A \mid Q(f(x), c_0 \varphi(x)) < Q(f^*, 0) \}$ is bounded.
}
\end{enumerate}
\end{theorem}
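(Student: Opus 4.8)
The plan is to derive this characterization from the general localization principles (Theorems~\ref{Thrm_LocPrin_Param} and~\ref{Thrm_LocPrin_Param_SublevelSets}), exactly as was done in the linear case. The two facts that must be established by hand are: (a) that hypothesis~\eqref{InclImpliesExactness} holds for $F(x,c) = Q(f(x), c\varphi(x))$, and (b) that $F$ is a penalty-type separating function. Everything else, including the ``only if'' direction and the dichotomy between conditions~1 and~2, is then inherited from those theorems. The computations will repeatedly exploit the single structural observation that, for any $x^* \in \Omega^*$, one has $\varphi(x^*) = 0$ and hence $F(x^*, c) = Q(f^*, 0)$ for every $c > 0$; this plays the role that $F(x^*,c) = f^*$ played for linear penalties. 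Note also that strict monotonicity forces $Q$ to be finite on $[0,+\infty)^2$, so $Q(f^*,0) < +\infty$.

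To verify~\eqref{InclImpliesExactness}, I would suppose $x^* \in \Omega^* \cap \argmin_{x \in A} F(x, c_1)$ for some $c_1 > 0$, so that $F(y, c_1) \ge Q(f^*, 0)$ for all $y \in A$, and then split $A$ according to feasibility. For $y \in \Omega$ strict monotonicity of $Q$ in its first argument gives $F(y, c) = Q(f(y), 0) \ge Q(f^*, 0)$, with equality precisely when $y \in \Omega^*$, independently of $c$. For $y \in A \setminus M$ one has $\varphi(y) > 0$, so strict monotonicity of $Q$ in its second argument makes $c \mapsto F(y, c)$ strictly increasing; hence for every $c > c_1$ we get $F(y, c) > F(y, c_1) \ge Q(f^*, 0)$. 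Combining the two cases shows $\argmin_{x\in A} F(x, c) = \Omega^*$ for all $c > c_1$, which is global parametric exactness.

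The heart of the proof is the penalty-type property, and I expect this to be the main obstacle. Given an increasing unbounded $\{c_n\}$ with $x_n \in \argmin_{x\in A} F(x, c_n)$, comparing with any fixed $y^* \in \Omega^*$ yields $Q(f(x_n), c_n\varphi(x_n)) \le Q(f^*, 0) < +\infty$; since $Q$ equals $+\infty$ whenever either argument is $+\infty$, both $f(x_n)$ and $c_n\varphi(x_n)$ are finite. Using $f \ge 0$ and monotonicity I would deduce $Q(0, c_n\varphi(x_n)) \le Q(f^*, 0)$, and here the hypothesis $Q(0, s) \to +\infty$ forces $\{c_n\varphi(x_n)\}$ to be bounded, whence $\varphi(x_n) \to 0$ because $c_n \to +\infty$. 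Lower semicontinuity of $\varphi$ and closedness of $A$ then give $\varphi(x^*) = 0$, i.e. feasibility of any cluster point $x^*$. Separately, monotonicity in the second argument gives $Q(f(x_n), 0) \le Q(f^*, 0)$, so $f(x_n) \le f^*$ by strict monotonicity in the first argument; lower semicontinuity of $f$ then yields $f(x^*) \le f^*$, and feasibility upgrades this to $f(x^*) = f^*$, so $x^* \in \Omega^*$. The delicate points here are the finiteness bookkeeping and the correct use of the blow-up hypothesis on $Q(0, \cdot)$.

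Finally I would reconcile condition~2 with the sublevel-boundedness hypothesis of Theorem~\ref{Thrm_LocPrin_Param_SublevelSets}, which demands boundedness of $S(c, x^*) = \{x \in A \mid F(x, c) < Q(f^*, 0)\}$ for all large $c$, not just at a single $c_0$. The observation above shows $S(c, x^*)$ contains no feasible point, since every $x \in \Omega$ satisfies $F(x,c) \ge Q(f^*,0)$; thus $S(c, x^*) \subset A \setminus M$, where $F(\cdot, c)$ is nondecreasing in $c$, and consequently $S(c, x^*) \subset S(c_0, x^*)$ for every $c \ge c_0$, so boundedness at the single value $c_0$ propagates upward. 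With~\eqref{InclImpliesExactness} and the penalty-type property in hand, Theorem~\ref{Thrm_LocPrin_Param} (via condition~1) and Theorem~\ref{Thrm_LocPrin_Param_SublevelSets} (via condition~2) then deliver both implications of the stated equivalence.
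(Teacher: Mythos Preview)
Your proposal is correct and follows essentially the same approach as the paper: verify condition~\eqref{InclImpliesExactness} via the strict monotonicity of $Q$, establish the penalty-type property from the growth hypothesis $Q(0,s)\to+\infty$ together with the comparison $F(x_n,c_n)\le Q(f^*,0)$, and then invoke Theorems~\ref{Thrm_LocPrin_Param} and~\ref{Thrm_LocPrin_Param_SublevelSets}. Your treatment is in fact slightly more explicit than the paper's in two places---you argue directly that $\{c_n\varphi(x_n)\}$ is bounded rather than by contradiction on a subsequence, and you spell out why boundedness of the sublevel set at a single $c_0$ propagates to all $c\ge c_0$ (a point the paper leaves implicit)---but the underlying route is the same.
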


\begin{proof}
From the fact that $Q$ is strictly monotone it follows that for any $c > 0$ one has
$$
  F(x, c) = Q(f(x), c \varphi(x)) = Q(f(x), 0) > Q(f^*, 0) \quad \forall x \in \Omega \setminus \Omega^*.
$$
Furthermore, if for some $c_0 > 0$ one has
$$
  \inf_{x \in A} F(x, c_0) := \inf_{x \in A} Q(f(x), c_0 \varphi(x)) = Q(f^*, 0),
$$
then applying the strict mononicity of $Q$ again one obtains that for any $c > c_0$ the following inequality holds true
$$
  F(x, c) = Q(f(x), c \varphi(x)) > Q(f^*, 0) \quad \forall x \in A \setminus \Omega.
$$
Therefore the validity of the condition $\Omega^* \cap \argmin_{x \in A} F(x, c_0) \ne \emptyset$ for some $c_0 > 0$ is
equivalent to the global exactness of $F(x, c)$ by virtue of the fact that for any $c > 0$ and 
$x^* \in \Omega^*$ one has $F(x^*, c) = Q(f^*, 0)$.

Let us verify that $F$ is a penalty-type separating function. Then applying Theorems~\ref{Thrm_LocPrin_Param} and
\ref{Thrm_LocPrin_Param_SublevelSets} one obtains the desired result.

Indeed, let $\{ c_n \} \subset (0, + \infty)$ be an increasing unbounded sequence, 
$x_n \in \argmin_{x \in A} F(x, c_n)$ for all $n \in \mathbb{N}$, and let $x^*$ be a cluster point of 
the sequence $\{ x_n \}$. Let us check, at first, that $\varphi(x_n) \to 0$ as $n \to \infty$. Arguing by reductio ad
absurdum, suppose that there exist $\varepsilon > 0$ and a subsequence $\{ x_{n_k} \}$ of the sequence $\{ x_n \}$
such that $\varphi(x_{n_k}) > \varepsilon$ for all $k \in \mathbb{N}$. Hence applying the monotonicity of $Q$ one
obtains that
$$
  F(x_{n_k}, c_{n_k}) := Q\big( f(x_{n_k}), c_{n_k} \varphi(x_{n_k}) \big) \ge Q(0, c_{n_k} \varepsilon) 
  \quad \forall k \in \mathbb{N}.
$$
Consequently, taking into account the fact that $Q(0, s) \to + \infty$ as $s \to +\infty$ one gets that
$F(x_{n_k}, c_{n_k}) \to + \infty$ as $k \to \infty$, which contradicts the fact that 
\begin{equation} \label{NonlinPenFunc_InfVsOptValue}
  \inf_{x \in A} F(x, c) \le F(y^*, c) = Q(f^*, 0) < + \infty \quad \forall c > 0, \: y^* \in \Omega^*
\end{equation}
(the inequality $Q(f^*, 0) < + \infty$ follows from the strict monotonicity of $Q$). Thus, $\varphi(x_n) \to 0$ as $n
\to \infty$. Applying the fact that $A$ is closed and $\varphi$ is l.s.c. on $A$ one gets that the cluster point $x^*$
is a feasible point of the problem $(\mathcal{P})$.

Note that from \eqref{NonlinPenFunc_InfVsOptValue} and the monotonicity of $Q$ it follows that
$f(x_n) \le f^*$ for all $n \in \mathbb{N}$. Hence taking into account the fact that $f$ is l.s.c. on $A$ one obtains
that $f(x^*) \le f^*$, which implies that $x^*$ is a globally optimal solution of $(\mathcal{P})$. Thus, $F(x, c)$ is a
penalty-type separating function.
\end{proof}

Let us also obtain \textit{new} simple sufficient conditions for the local exactness of the function $F(x, c)$ which
can be applied to the $q$-th order nonlinear penalty function with $q \in (0, 1)$. Note that since the function $Q$ is
strictly monotone, the point $(0, 0)$ is a global minimizer of $Q$ on the set $[0, + \infty] \times [0, + \infty]$.
Therefore, if $x^*$ is a locally optimal solution of $(\mathcal{P})$ such that $f(x^*) = 0$, then $x^*$ is a global
minimizer of $F(\cdot, c)$ on $A$ for all $c > 0$, which implies that $F(x, c)$ is locally exact at $x^*$. Thus, it is
sufficient to consider the case $f(x^*) > 0$.

\begin{theorem}
Let $x^*$ be a locally optimal solution of the problem $(\mathcal{P})$ such that $f(x^*) > 0$. Suppose that $f$ is
H\"{o}lder continuous with exponent $\alpha \in (0, 1]$ near $x^*$ , and there exist $\tau > 0$ and $r > 0$ such that
\begin{equation} \label{NonlinPenFunc_MetricSubreg}
  \varphi(x) \ge \tau [\dist(x, \Omega)]^{\alpha} \quad \forall x \in A \colon \| x - x^* \| < r.
\end{equation}
Suppose also that there exist $t_0 > 0$ and $c_0 > 0$ such that
\begin{equation} \label{NonlinearPenFunc_ConvFuncAssump}
  Q\big( f(x^*) - t, c_0 t \big) \ge Q(f(x^*), 0) \quad \forall t \in [0, t_0).
\end{equation}
Then the nonlinear penalty function $F(x, c)$ is locally exact at $x^*$.
\end{theorem}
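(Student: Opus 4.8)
The plan is to verify the definition of local exactness directly: since this penalty function carries no tuning parameter $\lambda$ (the set $\Lambda$ being a singleton), I must produce a threshold $c^* > 0$ together with a neighbourhood $U$ of $x^*$ so that $F(x, c) \ge F(x^*, c)$ for every $x \in U \cap A$ and every $c \ge c^*$. Because $x^* \in \Omega$ forces $\varphi(x^*) = 0$, the target value is simply $F(x^*, c) = Q(f(x^*), 0)$, independent of $c$; so the inequality to be established reads $Q(f(x), c\varphi(x)) \ge Q(f(x^*), 0)$ on $U \cap A$. Throughout I am in the nontrivial case $f(x^*) > 0$, the case $f(x^*) = 0$ having already been disposed of.

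The first and central step is a one-sided estimate controlling how far $f$ may drop below $f(x^*)$ in terms of the distance to the feasible set, namely $f(x^*) - f(x) \le L\,[\dist(x, \Omega)]^{\alpha}$ for $x \in A$ in a small neighbourhood of $x^*$, where $L$ is a H\"older constant for $f$ on a ball $B(x^*, \rho)$. To obtain this I would fix such an $x$, put $d = \dist(x, \Omega)$, and choose near-projections $y \in \Omega$ with $\|x - y\|$ arbitrarily close to $d$; shrinking the working neighbourhood guarantees that each such $y$ lies both in the H\"older ball and in the region where local optimality of $x^*$ applies, whence $f(y) \ge f(x^*)$ and $f(x) \ge f(y) - L\|x - y\|^{\alpha} \ge f(x^*) - L\|x - y\|^{\alpha}$; letting $\|x - y\| \to d$ yields the claimed bound.

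With this estimate in hand I would combine it with the error bound $\varphi(x) \ge \tau\,[\dist(x, \Omega)]^{\alpha}$ to eliminate the distance term. Writing $t = f(x^*) - f(x)$, the two inequalities give $\varphi(x) \ge (\tau/L)\,t$ whenever $t > 0$. I then set $c^* = c_0 L / \tau$ and shrink $U$, using H\"older continuity so that $t \le L\|x - x^*\|^{\alpha} < t_0$, to place $t$ inside $[0, t_0)$. Two cases remain. If $f(x) \ge f(x^*)$, then both arguments of $Q$ at $x$ dominate those at $x^*$, and monotonicity of $Q$ gives $F(x, c) = Q(f(x), c\varphi(x)) \ge Q(f(x^*), 0)$ at once. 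If instead $0 < t < t_0$, then for $c \ge c^*$ one has $c\varphi(x) \ge c(\tau/L)t \ge c_0 t$, so monotonicity of $Q$ in its second argument followed by hypothesis \eqref{NonlinearPenFunc_ConvFuncAssump} gives $F(x, c) = Q(f(x^*) - t, c\varphi(x)) \ge Q(f(x^*) - t, c_0 t) \ge Q(f(x^*), 0)$. In both cases $F(x, c) \ge F(x^*, c)$, which is precisely local exactness at $x^*$.

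The step I expect to demand the most care is the one-sided estimate of the second paragraph: the infimum defining $\dist(x, \Omega)$ need not be attained, so I must argue through near-projections and pass to a limit, while keeping every near-projection inside a single neighbourhood on which both the H\"older inequality and the local optimality of $x^*$ are simultaneously available (this forces the working radius to be a fraction of the H\"older, optimality, and error-bound radii). Everything afterwards is bookkeeping: the case split on the sign of $t$, the choice $c^* = c_0 L/\tau$, and two applications of the monotonicity of $Q$, one of them chained with \eqref{NonlinearPenFunc_ConvFuncAssump}. Note that the domain requirement $f(x^*) - t = f(x) \ge 0$ needed for \eqref{NonlinearPenFunc_ConvFuncAssump} is automatic from the standing non-negativity of $f$, and that $\varphi(x) > 0$ whenever $t > 0$ follows from $d^{\alpha} \ge t/L > 0$.
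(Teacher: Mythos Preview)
Your proposal is correct and follows essentially the same route as the paper's proof: both establish the one-sided estimate $f(x) \ge f(x^*) - L[\dist(x,\Omega)]^{\alpha}$ (the paper by citing an external proposition, you by a direct near-projection argument), then combine it with the error bound \eqref{NonlinPenFunc_MetricSubreg} and the monotonicity of $Q$ to reduce to assumption \eqref{NonlinearPenFunc_ConvFuncAssump}, arriving at the same threshold $c^* = c_0 L/\tau$. The only cosmetic difference is that the paper substitutes $t = L[\dist(x,\Omega)]^{\alpha}$ directly into \eqref{NonlinearPenFunc_ConvFuncAssump}, whereas you set $t = f(x^*) - f(x)$ and handle the case $t \le 0$ separately; both choices work and the underlying argument is identical.
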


\begin{proof}
Since $f$ is H\"{o}lder continuous with exponent $\alpha$ near the locally optimal solution $x^*$ of the problem
$(\mathcal{P})$, there exist $L > 0$ and $\delta < r$ such that
$$
  f(x) \ge f(x^*) - L \big[\dist(x, \Omega)\big]^{\alpha} \ge 0 \quad \forall x \in A \colon \| x - x^* \| < \delta
$$
(\cite{Dolgopolik_UT}, Proposition~2.7). Consequently, applying \eqref{NonlinPenFunc_MetricSubreg} and the fact that
$Q$ is monotone one obtains that for any $x \in A$ with $\| x - x^* \| < \delta$ one has
$$
  Q\big( f(x), c \varphi(x) \big) \ge 
  Q\Big( f(x^*) - L \big[\dist(x, \Omega)\big]^{\alpha}, c \tau \big[\dist(x, \Omega)\big]^{\alpha} \Big).
$$
Hence with the use of \eqref{NonlinearPenFunc_ConvFuncAssump} one gets that there exists $t_0 > 0$ and $c_0 > 0$ such
that for any $c \ge L c_0 / \tau$ one has
$$
  Q\big( f(x), c \varphi(x) \big) \ge Q( f(x^*), 0) 
  \quad \forall x \in A \colon \| x - x^* \| < 
  \min\left\{ \delta, \left( \frac{t_0}{L} \right)^{1 / \alpha} \right\},
$$
which implies that $F(x, c)$ is locally exact at $x^*$ and $c^*(x^*) \le c_0 / \tau$.
\end{proof}

\begin{remark}
Assumption~\eqref{NonlinearPenFunc_ConvFuncAssump} always holds true for the $q$-th order nonlinear penalty function
with $0 < q \le 1$. Indeed, applying the fact that the function $\omega(t) = t^q$ is H\"{o}lder continuous with exponent
$q$ and the H\"{o}lder coefficient $C = 1$ on $[0, + \infty)$ one obtains that
$$
  \big( f(x^*) - t \big)^q + c^q t^q \ge f(x^*)^q - t^q + c^q t^q \ge f(x^*)^q
$$
for any $t \in [0, f(x_*))$ and $c \ge 1$. Hence 
$$
  Q( f(x^*) - t, c t ) \ge Q( f(x^*), 0 ) \quad \forall t \in [0, f(x_*)) \quad \forall c \ge 1,
$$
which implies the required result.
\end{remark}

\begin{remark}
Note that assumption~\eqref{NonlinearPenFunc_ConvFuncAssump} in the theorem above cannot be strengthened. Namely, one
can easily verify that if the nonlinear penalty function $F$ is locally exact at a locally optimal solution $x^*$ for
all Lipschitz continuous functions $f$ and all function $\varphi$ satisfying \eqref{NonlinPenFunc_MetricSubreg}, then
\eqref{NonlinearPenFunc_ConvFuncAssump} holds true (one simply has to define $f(x) = - L \dist(x, \Omega)$ and 
$\varphi(x) = \dist(x, \Omega)$). Note also that the $q$-th order nonlinear penalty function does not satisfy 
assumption~\eqref{NonlinearPenFunc_ConvFuncAssump} for $q > 1$.
\end{remark}

\subsection{Example III: Continuously Differentiable Exact Penalty Functions}

In this section, we utilize the localization principle in order to improve existing results on the global exactness of 
continuously differentiable exact penalty functions. A continuously differentiable exact penalty function for
mathematical programming problems was introduced by Fletcher in \cite{Fletcher70,Fletcher73}. Later on, Fletcher's
penalty function was modified and thoroughly investigated by many researchers
\cite{DiPilloGrippo89,DiPillo1994,MukaiPolak,GladPolak,BoggsTolle1980,Bertsekas_book,HanMangasarian_C1PenFunc,
DiPilloGrippo85,DiPilloGrippo86,Lucidi92,ContaldiDiPilloLucidi_93,FukudaSilva,AndreaniFukuda}. Here, we study a
modification of the continuously differentiable penalty function for nonlinear second-order cone programming problems
proposed by Fukuda, Silva and Fukushima in \cite{FukudaSilva}. However, it should be pointed out that the
results of this subsection can be easily extended to the case of any existing modification of Fletcher's penalty
function.

Let $X = A = \mathbb{R}^d$, and suppose that the set $M$ has the form
$$
  M = \Big\{ x \in \mathbb{R}^d \Bigm| g_i(x) \in Q_{l_i + 1}, \quad i \in I, \quad h(x) = 0, \Big\}
$$
where $g_i \colon X \to \mathbb{R}^{l_i + 1}$, $I = \{ 1, \ldots, r \}$, and $h \colon X \to \mathbb{R}^s$ are given
functions, and 
$$
  Q_{l_i + 1} = \big\{ y = (y^0, \overline{y}) \in \mathbb{R} \times \mathbb{R}^{l_i} \bigm| 
  y^0 \ge \| \overline{y} \| \big\}
$$
is the second order (Lorentz) cone of dimension $l_i + 1$ (here $\| \cdot \|$ is the Euclidean norm). In this case the
problem $(\mathcal{P})$ is a nonlinear second-order cone programming problem. 

Following the ideas of \cite{FukudaSilva}, let use introduce a continuously differentiable exact penalty function for
the problem under consideration. Suppose that the functions $f$, $g_i$, $i \in I$ and $h$ are twice continuously
differentiable. 
For any $\lambda = (\lambda_1, \ldots, \lambda_r) \in \mathbb{R}^{l_1 + 1} \times \ldots \times \mathbb{R}^{l_r + 1}$
and $\mu \in \mathbb{R}^s$ denote by 
$$
  L(x, \lambda, \mu) = f(x) + \sum_{i = 1}^r \langle \lambda_i, g_i(x) \rangle + \langle \mu, h(x) \rangle,
$$
the standard Lagrangian function for the nonlinear second-order cone programming problem. Here 
$\langle \cdot, \cdot \rangle$ is the inner product in $\mathbb{R}^k$.

For a chosen $x \in \mathbb{R}^n$ consider the following unconstrained minimization problem, which allows one to obtain
an estimate of Lagrange multipliers:
\begin{multline} \label{SOCP_LagrangeMultEstimate}
  \min_{\lambda, \mu} \big\| \nabla_x L(x, \lambda, \mu) \big\|^2 + 
  \zeta_1 \sum_{i = 1}^r \Big( \langle \lambda_i, g_i(x) \rangle^2 + 
  \| (\lambda_i)_0 \overline{g}_i(x) + (g_i)_0(x) \overline{\lambda}_i \|^2 \Big) \\
  + \frac{\zeta_2}{2} \left( \| h(x) \|^2 + \sum_{i = 1}^r \dist^2\big( g_i(x), Q_{l_i + 1} \big) \right) \cdot
  \big( \| \lambda \|^2 + \| \mu \|^2 \big),
\end{multline}
where $\zeta_1$ and $\zeta_2$ are some positive constants, 
$\lambda_i = ((\lambda_i)_0, \overline{\lambda}_i) \in \mathbb{R} \times \mathbb{R}^{l_i}$, and the same notation
is used for $g_i(x)$. Observe that if $(x^*, \lambda^*, \mu^*)$ is a KKT-point of the problem $(\mathcal{P})$, then
$(\lambda^*, \mu^*)$ is a globally optimal solution of problem \eqref{SOCP_LagrangeMultEstimate}
(see~\cite{FukudaSilva}). Moreover, it is easily seen that for any $x \in \mathbb{R}^d$ there exists a globally optimal
solution of this problem, which we denote by $(\lambda(x), \mu(x))$. In order to ensure that an optimal solution is
unique one has to utilize a proper constraint qualification. 

Recall that a feasible point $x$ is called \textit{nondegenerate} (\cite{BonnansShapiro}, Def.~4.70), if
$$
  \begin{bmatrix}
    J g_1(x) \\
    \vdots \\
    J g_r(x) \\
    J h(x)
  \end{bmatrix} \mathbb{R}^d +
  \begin{bmatrix}
    \lineal T_{Q_{l_1 + 1}} \big( g_1(x) \big) \\
    \vdots \\
    \lineal T_{Q_{l_r + 1}} \big( g_r(x) \big) \\
    \{ 0 \}
  \end{bmatrix} =
  \begin{bmatrix}
    \mathbb{R}^{l_1 + 1} \\
    \vdots \\
    \mathbb{R}^{l_r + 1} \\
    \mathbb{R}^s
  \end{bmatrix},
$$
where $J g_i(x)$ is the Jacobian of $g_i(x)$, ``lin'' stands for the lineality subspace of a convex cone, i.e. 
the largest linear space contained in this cone, and $T_{Q_{l_1 + 1}} \big( g_1(x) \big)$ is the contingent cone to
$Q_{l_i + 1}$ at the point $g_i(x)$. Let us note that the nondegeneracy condition can be expressed as a
``linear independence-type'' condition (see~\cite{FukudaSilva}, Lemma~3.1, and \cite{BonnansRamirez}, Proposition~19).
Furthermore, by \cite{BonnansShapiro}, Proposition~4.75, the nondegeneracy condition guarantees that if $x$ is a
locally optimal solution of the problem $(\mathcal{P})$, then there exists a \textit{unique} Lagrange multiplier at
$x$.

Suppose that every feasible point of the problem $(\mathcal{P})$ is nondegenerate. Then one can verify that a
globally optimal solution $(\lambda(x), \mu(x))$ of problem \eqref{SOCP_LagrangeMultEstimate} is unique for all 
$x \in \mathbb{R}^d$, and the functions $\lambda(\cdot)$ and $\mu(\cdot)$ are continuously differentiable
(\cite{FukudaSilva}, Proposition~3.3).

Now we can introduce a new continuously differentiable exact penalty function for nonlinear second-order cone
programming problems, which is a simple modification of the penalty function from \cite{FukudaSilva}. Namely, choose
$\alpha > 0$ and $\varkappa \ge 2$, and define
\begin{equation} \label{BarrierTerms_SOC}
  p(x) = \frac{a(x)}{1 + \sum_{i = 1}^r \| \lambda_i(x) \|^2}, \quad
  q(x) = \frac{b(x)}{1 + \| \mu(x) \|^2},
\end{equation}
where
$$
  a(x) = \alpha - \sum_{i = 1}^r \dist^{\varkappa}\big( g_i(x), Q_{l_i + 1} \big), \quad
  b(x) = \alpha - \| h(x) \|^2.
$$
Finally, denote $\Omega_{\alpha} = \{ x \in \mathbb{R}^d \mid a(x) > 0, \: b(x) > 0 \}$, and define
\begin{multline} \label{C1_PenaltyFunc}
  F(x, c) = f(x) \\
  + \frac{c}{2 p(x)} \sum_{i = 1}^r 
  \left[ \dist^2\Big( g_i(x) + \frac{p(x)}{c} \lambda_i(x), Q_{l_i + 1} \Big) - 
  \frac{p(x)^2}{c^2} \| \lambda_i(x) \|^2 \right] \\
  + \langle \mu(x), h(x) \rangle + \frac{c}{2 q(x)} \| h(x) \|^2,
\end{multline}
if $x \in \Omega_{\alpha}$, and $F(x, c) = + \infty$ otherwise. Let us point out that $F(x, c)$ is, in essence, a
straightforward modification of the Hestenes-Powell-Rockafellar augmented Lagrangian function to the case of nonlinear
second-order cone programming problems \cite{LiuZhang2007,LiuZhang2008,ZhouChen2015} with Lagrange multipliers $\lambda$
and $\mu$ replaced by their estimates $\lambda(x)$ and $\mu(x)$. One can easily verify that the function $F(\cdot, c)$
is l.s.c. on $\mathbb{R}^d$, and continuously differentiable on its effective domain (see~\cite{FukudaSilva}).

Let us obtain \textit{first} simple \textit{necessary and sufficient} conditions for the global exactness of
continuously differentiable penalty functions.

\begin{theorem}[Localization Principle for $C^1$ Penalty Functions] \label{Thrm_SOC_C1Penalty_LP}
Let the functions $f$, $g_i$, $i \in I$, and $h$ be twice continuously differentiable, and suppose that every feasible
point of the problem $(\mathcal{P})$ is nondegenerate. Then the continuously differentiable penalty function $F(x, c)$
is globally exact if and only if it is locally exact at every globally optimal solution of the problem $(\mathcal{P})$
and one of the two following assumptions is satisfied:
\begin{enumerate}
\item{the function $F(x, c)$ is non-degenerate;
}

\item{there exists $c_0 > 0$ such that the set $\{ x \in \mathbb{R} \mid F(x, c_0) < f^* \}$ is bounded.
}
\end{enumerate}
In particular, if the set $\{ x \in \mathbb{R}^d \mid f(x) < f^* + \gamma, \: a(x) > 0, \: b(x) > 0 \}$ is bounded for
some $\gamma > 0$, then $F(x, c)$ is globally exact if and only if it is locally exact at every globally optimal
solution of the problem $(\mathcal{P})$.
\end{theorem}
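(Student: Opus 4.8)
The plan is to apply the parametric localization principle, Theorems~\ref{Thrm_LocPrin_Param} and \ref{Thrm_LocPrin_Param_SublevelSets}, in exactly the same manner as in the two preceding examples. Here $A = \mathbb{R}^d$ and, the defining functions being continuous, $\Omega$ is closed; moreover $f$ is continuous and $F(\cdot, c)$ is l.s.c.\ on $\mathbb{R}^d$, so every topological hypothesis of those theorems is met, and it remains to verify three things: the value identity $F(x^*, c) = f^*$ for all $x^* \in \Omega^*$ and $c > 0$, the implication \eqref{InclImpliesExactness}, and that $F$ is of penalty-type. I would open with the value identity. A globally optimal $x^*$ is feasible, hence nondegenerate, so it is a KKT point with a \emph{unique} Lagrange multiplier $(\lambda^*, \mu^*)$; since any KKT multiplier is a global minimizer of \eqref{SOCP_LagrangeMultEstimate} and that minimizer is unique, the estimates coincide with it, $(\lambda(x^*), \mu(x^*)) = (\lambda^*, \mu^*)$. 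Feasibility gives $h(x^*) = 0$, killing the two equality-constraint terms of \eqref{C1_PenaltyFunc}; for the cone terms I would invoke the self-duality of $Q_{l_i + 1}$ together with the second-order-cone complementarity $g_i(x^*) \in Q_{l_i+1}$, $\lambda_i^* \in N_{Q_{l_i+1}}(g_i(x^*))$ to obtain $P_{Q_{l_i+1}}\big(g_i(x^*) + \tfrac{p(x^*)}{c}\lambda_i(x^*)\big) = g_i(x^*)$, whence $\dist^2\big(g_i(x^*) + \tfrac{p(x^*)}{c}\lambda_i(x^*), Q_{l_i+1}\big) = \tfrac{p(x^*)^2}{c^2}\|\lambda_i(x^*)\|^2$ and every bracket vanishes. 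Thus $F(x^*, c) = f(x^*) = f^*$.

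For the penalty-type property I would first record the uniform lower bound
\[
  F(x, c) \ge f(x) - \frac{\alpha}{c} \qquad \forall x \in \mathbb{R}^d, \ c > 0,
\]
obtained by completing the square in the equality term and discarding the nonnegative distances in the cone terms, and by using the crucial estimates $p(x)\sum_i \|\lambda_i(x)\|^2 \le a(x) \le \alpha$ and $q(x)\|\mu(x)\|^2 \le b(x) \le \alpha$ that the barrier normalizations in \eqref{BarrierTerms_SOC} are designed to guarantee. Then, for an increasing unbounded $\{c_n\}$, minimizers $x_n \in \argmin_x F(x, c_n)$ and a cluster point $x^*$ of $\{x_n\}$, the bound $F(x_n, c_n) \le F(y^*, c_n) = f^*$ gives $f(x_n) \le f^* + \alpha/c_n$, so $f(x^*) \le f^*$ by continuity. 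To get feasibility I would, along the convergent subsequence, exploit that $\lambda(\cdot), \mu(\cdot)$ are continuous (hence bounded near $x^*$) and that $1/q(x) \ge 1/\alpha$, $1/p(x) \ge 1/\alpha$ on $\Omega_\alpha$, so that the isolated penalty terms force $\tfrac{c_n}{2\alpha}\|h(x_n)\|^2$ and $\tfrac{c_n}{2\alpha}\dist^2(g_i(x_n), Q_{l_i+1})$ to stay bounded; hence the constraint violations tend to zero and $x^* \in \Omega$. Together with $f(x^*) \le f^*$ this yields $x^* \in \Omega^*$, so $F$ is of penalty-type.

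The implication \eqref{InclImpliesExactness} is where the real work lies, and I expect it to be the main obstacle. Once the premise holds, the value identity gives $\min_x F(\cdot, c_0) = f^*$ and $F(\cdot, c_0) \ge f^*$ everywhere; using that each augmented term in \eqref{C1_PenaltyFunc} is nondecreasing in $c$ --- strictly increasing, with $F(x, c) \to +\infty$, at infeasible $x$ --- one propagates $\min_x F(\cdot, c) = f^*$ to all $c \ge c_0$ and excludes infeasible minimizers for large $c$. The delicate point, absent from the linear and nonlinear penalty examples, is that the cone terms do \emph{not} vanish identically on the feasible set (the estimate $\lambda(x)$ need not be complementary at a non-KKT feasible point), so feasible non-optimal points are not excluded a priori. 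I would rule them out via the gradient consistency of this Fletcher-type penalty function: an interior global minimizer is a stationary point of $F(\cdot, c)$, and for $c$ large this forces it to be a KKT point, at which the cone terms vanish and $F = f > f^*$ --- a contradiction. Combining the three ingredients and applying Theorems~\ref{Thrm_LocPrin_Param} and \ref{Thrm_LocPrin_Param_SublevelSets} yields the stated equivalence.

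Finally, for the ``in particular'' assertion, I would feed the lower bound back in: if $\{x : f(x) < f^* + \gamma,\ a(x) > 0,\ b(x) > 0\}$ is bounded, choose $c_0 > \alpha/\gamma$; then any $x$ with $F(x, c_0) < f^*$ lies in $\Omega_\alpha$ and satisfies $f(x) \le F(x, c_0) + \alpha/c_0 < f^* + \gamma$, so $\{x : F(x, c_0) < f^*\}$ is contained in the assumed bounded set. This is precisely the sublevel-boundedness hypothesis~2 of Theorem~\ref{Thrm_LocPrin_Param_SublevelSets}, so non-degeneracy need not be assumed separately, and global exactness reduces to local exactness at the points of $\Omega^*$.
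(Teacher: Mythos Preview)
Your overall plan and most of the ingredients match the paper's proof exactly: the value identity $F(x^*,c)=f^*$ at $x^*\in\Omega^*$, the lower bound $F(x,c)\ge f(x)-\alpha/c$, the penalty-type verification, and the ``in particular'' inclusion all appear in the paper in essentially the form you describe.

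The one substantive divergence is in establishing implication~\eqref{InclImpliesExactness}. You correctly flag the delicate point --- that the cone terms need not vanish at a feasible non-KKT point --- but your proposed resolution via gradient consistency (stationary point $\Rightarrow$ KKT point ``for $c$ large'') is problematic: the relevant result from \cite{FukudaSilva} (their Proposition~4.3, which the paper invokes only later in Theorem~\ref{Thrm_SOC_C1Penalty_Mod}) is a \emph{local} statement, valid in a neighbourhood of a nondegenerate feasible point, and you give no mechanism to control where an arbitrary minimizer of $F(\cdot,c)$ might lie. The paper avoids this entirely. It introduces the auxiliary representation
\[
  \Phi(x,c)=\min_{y\in K-G(x)}\Bigl(-p(x)\langle\lambda(x),y\rangle+\tfrac{c}{2}\|y\|^2\Bigr),
  \qquad
  F(x,c)=f(x)+\tfrac{1}{p(x)}\Phi(x,c)+\langle\mu(x),h(x)\rangle+\tfrac{c}{2q(x)}\|h(x)\|^2,
\]
observes that this makes $F$ nondecreasing in $c$, and then, for any minimizer $x^*$ of $F(\cdot,c)$ with $c>c_0$, compares the values at $c$ and $c_0$ to force the unique minimizing $y(x^*,c)$ in $\Phi$ to be zero. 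That single conclusion yields \emph{both} feasibility ($0\in K-G(x^*)$) \emph{and} $\Phi(x^*,c)=0$, hence $F(x^*,c)=f(x^*)$, in one stroke; no first-order/KKT machinery is needed. This is cleaner and global, and it is what you should replace your gradient-consistency step with.
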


\begin{proof}
Our aim is to apply the localization principle in the parametric form to the separating function
\eqref{C1_PenaltyFunc}. To this end, define $G(\cdot) = (g_1(\cdot), \ldots, g_r(\cdot))$, 
$K = Q_{l_1 + 1} \times \ldots \times Q_{l_r + 1}$, and introduce the function
\begin{equation} \label{C1_Penalty_AuxiliaryFunc}
  \Phi(x, c) = \min_{y \in K - G(x)} \left( - p(x) \langle \lambda(x), y \rangle + \frac{c}{2} \| y \|^2 \right).
\end{equation}
Note that the minimum is attained at a unique point $y(x, c)$ due to the facts $K - G(x)$ is a closed convex cone, and
the function on the right-hand side of the above equality is strongly convex in $y$. Observe also that 
\begin{equation} \label{C1_PenaltyFunc_Represent}
  F(x, c) = f(x) + \frac{1}{p(x)} \Phi(x, c) + \langle \mu(x), h(x) \rangle + \frac{c}{2 q(x)} \| h(x) \|^2
\end{equation}
(see~\cite{ShapiroSun}, formulae (2.5) and (2.7)). Consequently, the function $F(x, c)$ is nondecreasing
in $c$.

From \eqref{C1_Penalty_AuxiliaryFunc} and \eqref{C1_PenaltyFunc_Represent} it follows that $F(x, c) \le f(x)$ for any
feasible point $x$ (in this case $y = 0 \in K - G(x)$). Let, now, $(x^*, \lambda^*, \mu^*)$ be a KKT-point of the
problem $(\mathcal{P})$. Then by \cite{FukudaSilva}, Proposition~3.3(c) one has $\lambda(x^*) = \lambda^*$ and 
$\mu(x^*) = \mu^*$, which, in particular, implies that $\lambda_i(x^*) \in (Q_{l_i + 1})^*$ and 
$\langle \lambda_i(x^*), g_i(x^*) \rangle = 0$, where $(Q_{l_i + 1})^*$ is the polar cone of $Q_{l_i + 1}$. Then
applying the standard first order necessary and sufficient conditions for a minimum of a convex function on a convex set
one can easily verify that the infimum in
$$
  \dist^2\Big( g_i(x^*) + \frac{p(x^*)}{c} \lambda_i(x^*), Q_{l_i + 1} \Big) = 
  \inf_{z \in Q_{l_i + 1}} \left\| g_i(x^*) + \frac{p(x^*)}{c} \lambda_i(x^*) - z \right\|^2
$$
is attained at the point $z = g_i(x^*)$. Therefore $F(x^*, c) = f(x^*)$ for all $c > 0$ (see~\eqref{C1_PenaltyFunc}).
In particular, if $x^*$ is a globally optimal solution of the problem $(\mathcal{P})$, then $F(x^*, c) \equiv f^*$.

Suppose that for some $c_0 > 0$ one has
\begin{equation} \label{C1_PenaltyFunc_IntersectImpliesExact}
  \Omega^* \cap \argmin_{x \in \mathbb{R}^d} F(x, c_0) \ne \emptyset.
\end{equation}
Then 
\begin{equation} \label{C1_PenaltyFunc_Min}
  \min_x F(x, c) = f^* = F(x^*, c) \quad \forall c \ge c_0 \quad \forall x^* \in \Omega^*
\end{equation}
due to the fact that $F(x, c)$ is nondecreasing in $c$. Thus, for all $c \ge c_0$ one has
$\Omega^* \subseteq \argmin_{x \in \mathbb{R}^d} F(x, c)$.

Let, now, $c > c_0$ and $x^* \in \argmin_x F(x, c)$ be arbitrary. Clearly, if $h(x^*) \ne 0$, then 
$F(x^*, c) > F(x^*, c_0)$, which is impossible. Therefore $h(x^*) = 0$. Let, now, $y(x^*, c)$ be such that
$$
  \Phi(x^*, c) = - p(x^*) \langle \lambda(x^*), y(x^*, c) \rangle + \frac{c}{2} \| y(x^*, c) \|^2
$$
(see~\eqref{C1_Penalty_AuxiliaryFunc}). From the definitions of $x^*$ and $c_0$ it follows that
\begin{multline*}
  f^* = F(x^*, c_0) = f(x^*) + \frac{1}{p(x^*)} \Phi(x^*, c_0) \\
  \le 
  f(x^*) + \frac{1}{p(x^*)} \left( - p(x^*) \langle \lambda(x^*), y \rangle + \frac{c_0}{2} \| y \|^2 \right)
\end{multline*}
for any $y \in K - G(x^*)$. Hence for any $y \in (K - G(x^*)) \setminus \{ 0 \}$ one has
$$
  f(x^*) + \frac{1}{p(x^*)} \left( - p(x^*) \langle \lambda(x^*), y \rangle + \frac{c}{2} \| y \|^2 \right) >
  f^*.
$$
Consequently, taking into account the first equality in \eqref{C1_PenaltyFunc_Min} and the definition of $x^*$ one
obtains that $y(x^*, c) = 0$ and $\Phi(x^*, c) = 0$, which yields that $0 \in K - G(x^*)$, i.e. $x^*$ is feasible, and
$F(x^*, c) = f(x^*) = f^*$. Therefore $x^* \in \Omega^*$. Thus, $\argmin_{x \in \mathbb{R}^d} F(x, c) = \Omega^*$ for
all $c > c_0$ or, in other words, the validity of condition \eqref{C1_PenaltyFunc_IntersectImpliesExact} implies
that the penalty function $F(x, c)$ is globally exact.

Let us now check that $F(x, c)$ is a penalty-type separating function. Then applying Theorems~\ref{Thrm_LocPrin_Param}
and \ref{Thrm_LocPrin_Param_SublevelSets} we arrive at the required result.

Indeed, let $\{ c_n \} \subset (0, + \infty)$ be an increasing unbounded sequence, $x_n \in \argmin_{x} F(x, c_n)$ for
all $n \in \mathbb{N}$, and $x^*$ be a cluster point of the sequence $\{ x_n \}$. As it was noted above, 
$F(y^*, c) = f^*$ for any globally optimal solution of the problem $(\mathcal{P})$. Therefore $F(x_n, c_n) \le f^*$ for
all $n \in \mathbb{N}$. On the other hand, minimizing the function $\omega(x, t) = - \| \mu(x) \| t + c t^2 / 2 q(x)$
with respect to $t$ one obtains that
\begin{equation} \label{C1PenFunc_GlobMin_LowerEstimate}
  F(x_n, c_n) \ge f(x_n) - \sum_{i = 1}^r \frac{p(x_n)}{2 c_n} \| \lambda_i(x_n) \|^2 
  - \frac{q(x_n)}{2 c_n} \| \mu(x_n) \|^2 \ge f(x_n) - \frac{\alpha}{c_n}.
\end{equation}
Hence passing to the limit as $n \to + \infty$ one obtains that $f(x^*) \le f^*$. Therefore it remains to show that
$x^*$ is a feasible point of the problem $(\mathcal{P})$.

Arguing by reductio ad absurdum, suppose that $x^*$ is not feasible. Let, at first, $h(x^*) \ne 0$. Then there
exist $\varepsilon > 0$ and a subsequence $\{ x_{n_k} \}$ such that $\| h(x_{n_k}) \| \ge \varepsilon$ for
all $k \in \mathbb{N}$. Note that since $\{ x_n \}$ is a convergent sequence and the function $\mu(\cdot)$ is
continuous, there exists $\mu_0 > 0$ such that $\| \mu(x_n) \| \le \mu_0$ for all $n \in \mathbb{N}$. Furthermore,
it is obvious that $\| h(x_n) \|^2 < \alpha$ for all $n \in \mathbb{N}$. Consequently, one has
$$
  F(x_{n_k}, c_{n_k}) \ge f(x_n) - \frac{\alpha}{2 c_{n_k}} - \mu_0 \alpha + 
  \frac{c_{n_k} \varepsilon^2}{2(\alpha - \varepsilon^2)}.
$$
(clearly, one can suppose that $\varepsilon^2 < \alpha$). Therefore $\limsup_{n \to \infty} F(x_n, c_n) = + \infty$,
which is impossible. Thus, $h(x^*) = 0$.

Suppose, now, that $g_i(x^*) \notin Q_{l_i + 1}$ for some $i \in I$. Then there exist $\varepsilon > 0$ and a
subsequence $\{ x_{n_k} \}$ such that $\dist( g_i(x_{n_k}), Q_{l_i + 1}) \ge \varepsilon$ for all $k \in \mathbb{N}$.
Note that $p(x) \| \lambda_i(x) \| / c < \alpha / c$. Consequently, one has 
$\dist( g_i(x_{n_k}) + p(x_{n_k}) \lambda_i(x_{n_k}) / c_{n_k}, Q_{l_i + 1}) \ge \varepsilon / 2$ for any
sufficiently large $k \ge n$. Therefore
$$
  F(x_{n_k}, c_{n_k}) \ge f(x_{n_k}) + 
  \frac{c_{n_k} \varepsilon^2}{2 (\alpha - \varepsilon^{\varkappa})}  - \frac{\alpha}{c_{n_k}}
$$
for any $k$ large enough (obviously, we can assume that $\varepsilon^{\varkappa} < \alpha$). Passing to the limit as $k
\to \infty$ one obtains that $\limsup_{n \to \infty} F(x_n, c_n) = + \infty$, which is impossible. Thus, $x^*$ is a
feasible point of the problem $(\mathcal{P})$.

Finally, note that from \eqref{C1PenFunc_GlobMin_LowerEstimate} it follows that
$$
  \big\{ x \in \mathbb{R}^d \bigm| F(x, c) < f^* \big\} \subseteq 
  \big\{ x \in \mathbb{R}^d \bigm| f(x) < f^* + \gamma, \: a(x) > 0, \: b(x) > 0 \big\}
$$
for all $c > \alpha / \gamma$.
\end{proof}

\begin{remark} \label{Remark_C1PenFunc}
{(i)~Let us note that the local exactness of penalty function \eqref{C1_PenaltyFunc} at a globally optimal solution of
the problem $(\mathcal{P})$ can be easily established with the use of second sufficient optimality conditions 
(see~\cite{FukudaSilva}, Theorem~5.7).
}

\noindent{(ii)~Note that from the proof of the theorem above it follows that $F(x^*, c) = f(x^*)$ for any KKT-point
$(x^*, \lambda^*, \mu^*)$ of the problem $(\mathcal{P})$.
}
\end{remark}

Following the underlying idea of the localization principle and utilizing some specific properties of continuously
differentiable penalty function \eqref{C1_PenaltyFunc} we can obtain stronger necessary and sufficient conditions for
the global exactness of this function than the ones in the theorem above. These conditions does not rely on the local
exactness property and, furthermore, strengthen existing \textit{sufficient} conditions for global exactness of
continuously differentiable exact penalty functions for nonlinear second-order cone programming problems
(\cite{FukudaSilva}, Proposition~4.9). However, it should be emphasized that these conditions heavily rely on 
the particular structure of the penalty function under consideration.

\begin{theorem} \label{Thrm_SOC_C1Penalty_Mod}
Let the functions $f$, $g_i$, $i \in I$, and $h$ be twice continuously differentiable, and suppose that every feasible
point of the problem $(\mathcal{P})$ is nondegenerate. Then the continuously differentiable penalty function $F(x, c)$
is globally exact if and only if there exists $c_0 > 0$ such that 
the set $\{ x \in \mathbb{R}^d \mid F(x, c_0) < f^* \}$ is bounded. In particular, it is exact, if there exists 
$\gamma > 0$ such that the set $\{ x \in \mathbb{R}^d \mid f(x) < f^* + \gamma, \: a(x) > 0, \: b(x) > 0 \}$ is bounded.
\end{theorem}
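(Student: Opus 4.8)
The plan is to prove the two implications of the equivalence separately and then deduce the ``in particular'' clause. The direction from global exactness to boundedness is immediate: if $F(x,c)$ is globally exact with least exact parameter $c^*$, then for every $c>c^*$ one has $\argmin_{x} F(x,c)=\Omega^*$ and $\min_x F(x,c)=f^*$, so the set $\{x\mid F(x,c)<f^*\}$ is empty and a fortiori bounded. For the ``in particular'' statement I would use the inclusion $\{x\mid F(x,c_0)<f^*\}\subseteq\{x\mid f(x)<f^*+\alpha/c_0,\ a(x)>0,\ b(x)>0\}$, which is exactly the one established at the end of the proof of Theorem~\ref{Thrm_SOC_C1Penalty_LP} via the lower bound \eqref{C1PenFunc_GlobMin_LowerEstimate}; choosing $c_0>\alpha/\gamma$ reduces the hypothesis on $\{f<f^*+\gamma,\ a>0,\ b>0\}$ to the boundedness of a strict sublevel set of $F(\cdot,c_0)$.

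The substance is the reverse implication, and the plan is to reduce it to the single equality $\min_x F(x,c_0)=f^*$ for some $c_0$. Recall from the proof of Theorem~\ref{Thrm_SOC_C1Penalty_LP} three structural facts about this $F$: it is nondecreasing in $c$ (via the representation \eqref{C1_PenaltyFunc_Represent}); one has $F(x^*,c)=f^*$ for every $x^*\in\Omega^*$ and all $c$; and the validity of $\Omega^*\cap\argmin_x F(x,c_0)\neq\emptyset$ for a single $c_0$ already forces global exactness. Since $F(x^*,c_0)=f^*$, the equality $\min_x F(x,c_0)=f^*$ is the same as $\Omega^*\cap\argmin_x F(x,c_0)\neq\emptyset$, so establishing this equality for some large $c_0$ completes the argument. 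First I would use the boundedness hypothesis together with monotonicity in $c$ to confine $\{x\mid F(x,c)<f^*\}\subseteq K$ for all $c\ge c_0$; lower semicontinuity of $F(\cdot,c)$ then guarantees that the finite infimum is attained at some minimizer $x(c)\in\Omega_{\alpha}$, which also yields non-degeneracy.

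It remains to show $\min_x F(x,c)=f^*$ for $c$ large. Since $\min_x F(x,c)\le F(x^*,c)=f^*$ always holds, I only need the reverse inequality. The key observation is that, because $F(\cdot,c)=+\infty$ off $\Omega_{\alpha}$ and is continuously differentiable on the open set $\Omega_{\alpha}$, every minimizer $x(c)$ is an interior stationary point, $\nabla_x F(x(c),c)=0$. The minimizers stay in the bounded set $K$, and by the penalty-type property every cluster point of $\{x(c)\}$ as $c\to+\infty$ lies in $\Omega^*$. I would then invoke the first-order correspondence underlying the construction of \eqref{C1_PenaltyFunc}: under the nondegeneracy of every feasible point there exists $c^*$ such that for all $c\ge c^*$ every stationary point of $F(\cdot,c)$ contained in the bounded region $K$ is a KKT point of $(\mathcal{P})$, and hence feasible. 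Consequently, for $c\ge c^*$ the minimizer $x(c)$ is a KKT point, so by Remark~\ref{Remark_C1PenFunc}(ii) one has $F(x(c),c)=f(x(c))\ge f^*$; combined with the opposite inequality this gives $\min_x F(x,c)=f^*$ and $x(c)\in\Omega^*$, and the implication recalled above yields global exactness.

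The main obstacle is precisely this correspondence ``interior stationary point of $F(\cdot,c)$ in a bounded region $\Rightarrow$ KKT point of $(\mathcal{P})$, for all large $c$''. This is what lets the argument bypass any assumption of local exactness: instead of showing $F(\cdot,c)\ge f^*$ on a neighbourhood of each optimal solution (which would require second-order sufficient conditions, cf.\ Remark~\ref{Remark_C1PenFunc}(i)), one uses only first-order stationarity of the minimizers and the feasibility forced by the vanishing of $\nabla_x F$ as $c\to+\infty$. Proving the correspondence rigorously requires expanding $\nabla_x F(x,c)$ in terms of the gradients of $\lambda(\cdot)$, $\mu(\cdot)$ and of the constraint functions, and using nondegeneracy to show that a stationary $x$ with bounded multiplier estimates cannot remain infeasible once $c$ is large; this is exactly where the multiplier-estimate structure and the analysis of \cite{FukudaSilva} enter, and the twice-differentiability of $f,g_i,h$ guarantees that $\lambda(\cdot),\mu(\cdot)$ are $C^1$ so that these gradient computations are valid.
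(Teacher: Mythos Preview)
Your proposal is correct and follows essentially the same route as the paper: both arguments extract global minimizers lying in the bounded sublevel set, observe they are interior stationary points of $F(\cdot,c)$, use the penalty-type property to show they cluster at some $x^*\in\Omega^*$, and then invoke the modification of \cite{FukudaSilva}, Proposition~4.3 to conclude that these stationary points are KKT, whence $F=f\ge f^*$ and exactness follows. The one refinement is that the paper states the stationary-$\Rightarrow$-KKT correspondence \emph{locally}, for points in a neighbourhood of the cluster point $x^*$, rather than uniformly on the bounded region $K$ as you phrase it; since you have already shown that the minimizers accumulate at $\Omega^*$, the local version is all that is needed, and your argument goes through with that adjustment.
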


\begin{proof}
Denote $S(c) = \{ x \in \mathbb{R}^d \mid F(x, c) < f^* \}$. If $F(x, c)$ is globally exact, then, as it is easy to
check, one has $S(c) = \emptyset$. Therefore it remains to prove the ``if'' part of the theorem. 

If $S(c) = \emptyset$ for some $c > 0$, then  $F(x, c) \ge f^*$ for all $x \in \mathbb{R}^d$, and arguing in the same
way as at the beginning of the proof of Theorem~\ref{Thrm_SOC_C1Penalty_LP} one can easily obtain the desired result.
Thus, one can suppose that $S(c) \ne \emptyset$ for all $c > 0$. 

Choose an increasing unbounded sequence $\{ c_n \} \subset [c_0, + \infty)$. Taking into account the facts that 
$F(\cdot, c)$ is l.s.c. and nondecreasing in $c$, and applying the boundedness of the set $S(c_0)$ one obtains that for
any $n \in \mathbb{N}$ the function $F(\cdot, c_n)$ attains a global minimum at a point $x_n \in S(c_n) \subseteq
S(c_0)$. Applying the boundedness of the set $S(c_0)$ once again one obtains that there exists a cluster point $x^*$ of
the sequence $\{ x_n \}$. Replacing, if necessary, the sequence $\{ x_n \}$ with its subsequence we can suppose that
$x_n$ converges to $x^*$. As it was shown in Theorem~\ref{Thrm_SOC_C1Penalty_LP}, $F(x, c)$ is a penalty-type separating
function. Therefore $x^*$ is a globally optimal solution of the problem $(\mathcal{P})$, and $F(x^*, c) = f^*$ for all
$c > 0$.

From the fact that $x_n$ is a point of global minimum of $F(\cdot, c_n)$ it follows that $\nabla_x F(x_n, c_n) = 0$.
Then applying a direct modification of \cite{FukudaSilva}, Proposition~4.3 to the case of penalty function
\eqref{C1_PenaltyFunc} one obtains that for any $x_n$ in a sufficiently small neighbourhood of $x^*$ (i.e.
for any sufficiently large $n \in \mathbb{N}$) the triplet $(x_n, \lambda(x_n), \mu(x_n))$ is a KKT-point of the problem
$(\mathcal{P})$. Hence taking into account Remark~\ref{Remark_C1PenFunc} one gets that $F(x_n, c_n) = f(x_n) \ge f^*$
for any sufficiently large $n \in \mathbb{N}$, which contradicts our assumption that $S(c) \ne \emptyset$ for all $c >
0$ due to the definition of $x_n$. Thus, the penalty function $F(x, c)$ is globally exact.
\end{proof}

Let us note that the results of this subsection can be easily extended to the case of nonlinear semidefinite
programming problems (cf.~\cite{Dolgopolik_GSP}, Sections~8.3 and 8.4). Namely, suppose that $A = \mathbb{R}^d$, and let
$$
  M = \Big\{ x \in \mathbb{R}^d \Bigm| G(x) \preceq 0, \: h(x) = 0 \Big\},
$$
where $G \colon X \to \mathbb{S}^l$ and $h \colon X \to \mathbb{R}^s$ are given functions, $\mathbb{S}^l$ is the set of
all $l \times l$ real symmetric matrices, and the relation $G(x) \preceq 0$ means that the matrix $G(x)$ is negative
semidefinite. We suppose that the space $\mathbb{S}^l$ is equipped with the Frobenius norm 
$\| A \|_F = \sqrt{\trace(A^2)}$. In this case the problem $(\mathcal{P})$ is a nonlinear semidefinite programming
problem. 

Suppose that the functions $f$, $G$ and $h$ are twice continuously differentiable. For any $\lambda \in \mathbb{S}^l$
and $\mu \in \mathbb{R}^s$ denote by
$$
  L(x, \lambda, \mu) = f(x) + \trace( \lambda G(x) ) + \langle \mu, h(x) \rangle,
$$
the standard Lagrangian function for the nonlinear semidefinite programming problem. For a chosen $x \in \mathbb{R}^n$
consider the following unconstrained minimization problem, which allows one to compute an estimate of Lagrange
multipliers:
\begin{multline} \label{SemiDef_LagrangeMultEstimate}
  \min_{\lambda, \mu} \big\| \nabla_x L(x, \lambda, \mu) \big\|^2 + 
  \zeta_1 \trace( \lambda^2 G(x)^2 ) \\
  + \frac{\zeta_2}{2} \left( \| h(x) \|^2 + \sum_{i = 1}^r \dist^2\big( G(x), S^l_{-} \big) \right) \cdot
  \big( \| \lambda \|_F^2 + \| \mu \|^2 \big),
\end{multline}
where $\zeta_1$ and $\zeta_2$ are some positive constants, and $S^l_{-}$ is the cone of $l \times l$ real negative
semidefinite matrices. One can verify (cf.~\cite{Dolgopolik_GSP}, Lemma~4) that for any $x \in \mathbb{R}^d$ there
exists a unique globally optimal solution $(\lambda(x), \mu(x))$ of this problem, provided every feasible point of the
problem $(\mathcal{P})$ is non-degenerate, i.e. provided for any feasible $x$ one has
$$
  \begin{bmatrix}
    D G(x_*) \\
    J h(x_*)
  \end{bmatrix} \mathbb{R}^d +
  \begin{bmatrix}
    \lineal T_{\mathbb{S}^l_-} \big( G(x_*) \big) \\
    \{ 0 \}
  \end{bmatrix} =
  \begin{bmatrix}
    \mathbb{S}^l \\
    \mathbb{R}^s
  \end{bmatrix}.
$$
(see \cite{BonnansShapiro}, Def.~4.70). Let us note that, as in the case of second order cone programming problems, the
above nondegeneracy condition can be rewritten as a ``linear independence-type'' condition (see \cite{BonnansShapiro},
Proposition~5.71).

Now we can introduce \textit{first} continuously differentiable \textit{exact} penalty function for nonlinear
semidefinite programming problems. Namely, choose $\alpha > 0$ and $\varkappa \ge 1$, and define
$$
  p(x) = \frac{a(x)}{1 + \trace(\lambda(x)^2)}, \quad
  q(x) = \frac{b(x)}{1 + \| \mu(x) \|^2},
$$
where
$$
  a(x) = \alpha - \trace\big( [ G(x) ]_+^2 \big)^{\varkappa}, \quad
  b(x) = \alpha - \| h(x) \|^2,
$$
and $[\cdot]_+$ denotes the projection of a matrix onto the cone of $l \times l$ positive semidefinite matrices. 
Denote $\Omega_{\alpha} = \{ x \in \mathbb{R}^d \mid a(x) > 0, \: b(x) > 0 \}$, and define
\begin{multline} \label{C1_PenaltyFunc_SemiDef}
  F(x, c) = f(x) 
  + \frac{1}{2c p(x)} \Big( \trace\big( [c G(x) + p(x) \lambda(x)]_+^2 \big) - 
  p(x)^2 \trace(\lambda(x)^2) \Big) \\
  + \langle \mu(x), h(x) \rangle + \frac{c}{2 q(x)} \| h(x) \|^2,
\end{multline}
if $x \in \Omega_{\alpha}$, and $F(x, c) = + \infty$ otherwise. Let us point out that $F(x, c)$ is, in essence, 
a direct modification of the Hestenes-Powell-Rockafellar augmented Lagrangian function to the case of nonlinear
semidefinite programming problems
\cite{SunZhangWu2006,SunSunZhang2008,ZhaoSunToh2010,Sun2011,WenGoldfarbYin2010,LuoWuChen2012,WuLuoDingChen2013,
WuLuoYang2014,YamashitaYabe2015} with Lagrange multipliers $\lambda$ and $\mu$ replaced by their estimates $\lambda(x)$
and $\mu(x)$. One can verify that the function $F(\cdot, c)$ is l.s.c. on $\mathbb{R}^d$, and continuously
differentiable on its effective domain. Furthermore it is possible to extend Theorems~\ref{Thrm_SOC_C1Penalty_LP} and
\ref{Thrm_SOC_C1Penalty_Mod} to the case of continuously differentiable penalty function \eqref{C1_PenaltyFunc_SemiDef},
thus obtaining \textit{first} necessary and sufficient conditions for the global exactness of $C^1$ penalty functions
for nonlinear semidefinite programming problems. However, we do not present the proofs of these results here, and leave
them to the interested reader.

\subsection{Example IV: Rockafellar-Wets' Augmented Lagrangian Function}
\label{Example_ALRW}

The separating functions studied in the previous examples do not depend on any additional parameters apart from
the penalty parameter $c$. This fact does not allow one to fully understand the concept of parametric exactness. In
order to illuminate the main features of parametric exactness, in this example we consider a separating function that
depends on additional parameters, namely Lagrange multipliers. Below, we apply the general theory of parametrically
exact separating functions to the augmented Lagrangian function introduced by Rockafellar and Wets in \cite{RockWets}
(see also
\cite{ShapiroSun,HuangYang2003,HuangYang2005,ZhouZhouYang2014,Dolgopolik_ALRW,RuckmannShapiro,KanSong,
KanSong2,BurachikYangZhou2017}). 

Let $P$ be a topological vector space of parameters. Recall that a function 
$\Phi \colon X \times P \to \mathbb{R} \cup \{ + \infty \} \cup \{ - \infty \}$ is called \textit{a dualizing
parameterization function} for $f$ iff $\Phi(x, 0) = f(x)$ for any feasible point of the problem $(\mathcal{P})$.
A function $\sigma \colon P \to [0, + \infty]$ such that $\sigma(0) = 0$ and $\sigma(p) > 0$ for all $p \ne 0$ is called
\textit{an augmenting function}. Let, finally, $\Lambda$ be a vector space of \textit{multipliers}, and let the pair
$(\Lambda, P)$ be equipped with a bilinear coupling function 
$\langle \cdot, \cdot \rangle \colon \Lambda \times P \to \mathbb{R}$.

Following the ideas of Rockafellar and Wets \cite{RockWets}, define the augmented Lagrangian function
\begin{equation} \label{ALRW_Def}
  \mathscr{L}(x, \lambda, c) = 
  \inf_{p \in P} \Big( \Phi(x, p) - \langle \lambda, p \rangle + c \sigma(p) \Big),
\end{equation}
We suppose that $\mathscr{L}(x, \lambda, c) > - \infty$ for all $x \in X$, $\lambda \in \Lambda$ and $c > 0$. Let us
obtain simple necessary and sufficient conditions for the strict global parametric exactness of the augmented Lagrangian
function $\mathscr{L}(x, \lambda, c)$ with the use of the localization principle. These conditions were first obtained
by the author in \cite{Dolgopolik_ALRW}.

\begin{remark}
It is worth mentioning that in the context of the theory of augmented Lagrangian functions, a vector 
$\lambda^* \in \Lambda$ is a strictly exact tuning parameter of the function $\mathscr{L}(x, \lambda, c)$ iff
$\lambda^*$ \textit{supports an exact penalty representation of the problem} $(\mathcal{P})$ (see~\cite{RockWets},
Definition~11.60). Furthermore, if the infimum in \eqref{ALRW_Def} is attained for all $x$, $\lambda$ and $c$, then the
strict global parametric exactness of the augmented Lagrangian function $\mathscr{L}(x, \lambda, c)$ is equivalent to
the existence of \textit{an augmented Lagrange multiplier} (see~\cite{RockWets}, Theorem~11.61, and
\cite{Dolgopolik_ALRW}, Proposition~4 and Corollary~1). Furthermore, in this case $\lambda^*$ is a strictly exact tuning
parameter iff it is an augmented Lagrange multiplier.
\end{remark}

\begin{remark}
Clearly, the definitions of strict global parametric exactness and global parametric exactness do not coincide in the
case of the augmented Lagrangian function $\mathscr{L}(x, \lambda, c)$ defined above. For example, let $P$ be a normed
space, $\Lambda$ be the topological dual of $P$, $\sigma(p) = \| p \|^2$, and
$$
  \Phi(x, p) = \begin{cases}
    f(x) + \max\{ - 1, - \| p \| \}, & \text{if } x \in \Omega, \\
    + \infty, & \text{if } x \notin \Omega.
  \end{cases},
$$
Then, as it easy to check, $\mathscr{L}(x, \lambda, c)$ is globally parametrically exact with the exact tuning
parameter $\lambda^* = 0$ and $c^*(\lambda^*) = 0$, but it is not strictly globally parametrically exact, since 
$\mathscr{L}(x, \lambda, c) < f(x)$ for all $x \in \Omega$, $\lambda \in \Lambda$ and $c > 0$. When one compares 
strict global parametric exactness and global parametric exactness in the case of the augmented Lagrangian
$\mathscr{L}(x, \lambda, c)$, it appears that the strict global parametric exactness is more natural
in this case. Apart from the fact that there exist many connections of the strict global parametric exactness with
existing results on augmented Lagrangian function $\mathscr{L}(x, \lambda, c)$ pointed out above, the application of the
localization principle leads to simpler results in the case of the strict global parametric exactness. In particular, it
is rather difficult to verify that the validity of the condition $\Omega^* \cap \argmin_{x \in A} \mathscr{L}(x,
\lambda^*, c) \ne \emptyset$ implies the global parametric exactness, while the condition
\begin{equation} \label{RW_ALF_EquivExactnessCond}
  \Omega^* \cap \argmin_{x \in A} \mathscr{L}(x, \lambda^*, c) \ne \emptyset, \quad 
  \min_{x \in A} \mathscr{L}(x, \lambda^*, c) = f^*
\end{equation}
is equivalent to the strict global parametric exactness of $\mathscr{L}(x, \lambda, c)$ under some natural assumptions 
(see Theorem~\ref{Thrm_LocPrin_ALRW} below).
\end{remark}

Recall that the augmenting function $\sigma$ is said \textit{to have a valley at zero} iff for any neighbourhood 
$U \subset P$ of zero there exists $\delta > 0$ such that $\sigma(p) \ge \delta$ for any $p \in P \setminus U$. The
assumption that the augmenting function $\sigma$ has a valley at zero is widely used in the literature on augmented
Lagrangian functions (see, e.g.,~\cite{BurachikRubinov,ZhouYang2009,ZhouYang2012,ZhouZhouYang2014}).

\begin{theorem}[Localization Principle for Augmented Lagrangian Functions] \label{Thrm_LocPrin_ALRW}
Suppose that the following assumptions are valid:
\begin{enumerate}
\item{$A$ and $\Omega$ are closed;
\label{Assumpt_ALRW_Closedness}
}

\item{$f$ and $\mathscr{L}(\cdot, \lambda, c)$ for all $\lambda \in \Lambda$ and $c > 0$ are l.s.c. on $A$;}

\item{$\Phi$ is l.s.c. on $A \times \{ 0 \}$;}

\item{$\sigma$ has a valley at zero;
\label{Assumpt_ALRW_ValleyAtZero}
}

\item{there exists $r > 0$ such that for any $c \ge r$, $x \in A$ and $\lambda \in \Lambda$ one has
$$
  \argmin_{p \in P} \Big( \Phi(x, p) - \langle \lambda, p \rangle + c \sigma(p) \Big) \ne \emptyset
$$
i.e. the infimum in \eqref{ALRW_Def} is attained.
\label{Assumpt_ALRW_InfAttainment}
}
\end{enumerate}
Then the augmented Lagrangian function $\mathscr{L}(x, \lambda, c)$ is strictly globally parametrically exact if and
only if there exist $\lambda^*$ and $c_0 > 0$ such that $\mathscr{L}(x, \lambda, c)$ is locally parametrically exact at
every globally optimal solution of the problem $(\mathcal{P})$ with the exact tuning parameter $\lambda^*$,
$$
  \mathscr{L}(x^*, \lambda^*, c) = f^* \quad \forall x^* \in \Omega^* \quad \forall c \ge c_0,
$$
and one of the following two conditions is valid:
\begin{enumerate}
\item{the function $\mathscr{L}(x, \lambda, c)$ is non-degenerate for $\lambda = \lambda^*$;
\label{Assumpt_ALRW_1}
}

\item{the set $\{ x \in A \Bigm| \mathscr{L}(x, \lambda^*, c_0) < f^* \Big\}$ is bounded.
\label{Assumpt_ALRW_2}
}
\end{enumerate}
\end{theorem}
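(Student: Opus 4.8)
The plan is to derive the theorem from the two Strengthened Localization Principles, Theorems~\ref{Thrm_LocPrin_Param_Strict} and \ref{Thrm_LocPrin_Param_Strict_Sublevel}, applied to $F=\mathscr{L}$. First I would check their common hypotheses: the closedness and lower semicontinuity requirements are exactly assumption~\ref{Assumpt_ALRW_Closedness} and the l.s.c.\ hypotheses on $f$ and $\mathscr{L}(\cdot,\lambda,c)$, so two things remain to establish. The first is the implication hypothesis, namely that the validity of \eqref{InclImpliesStrictExactness} for some $\lambda^*$ and $c$ forces strict global parametric exactness. The second, and the crux, is that the penalty-type property does \emph{not} appear among the conditions listed in the present theorem, so I must show that under assumptions~\ref{Assumpt_ALRW_Closedness}--\ref{Assumpt_ALRW_InfAttainment} the function $\mathscr{L}(x,\lambda,c)$ is \emph{automatically} of penalty-type for $\lambda=\lambda^*$. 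Throughout I would exploit two elementary consequences of \eqref{ALRW_Def}: since $\sigma\ge 0$, the map $c\mapsto\mathscr{L}(x,\lambda,c)$ is nondecreasing; and taking $p=0$ (recall $\sigma(0)=0$ and $\Phi(x,0)=f(x)$ on $\Omega$) gives $\mathscr{L}(x,\lambda,c)\le f(x)$ for every feasible $x$, whence $\mathscr{L}(y^*,\lambda^*,c)\le f^*$ for all $y^*\in\Omega^*$ and $c>0$.

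I expect the penalty-type property to be the main obstacle. I would take an increasing unbounded sequence $\{c_n\}\subset[c_0,+\infty)$ with $c_0\ge r$, points $x_n\in\argmin_{x\in A}\mathscr{L}(x,\lambda^*,c_n)$, and a cluster point $x^*$; passing to a subsequence I may assume $x_n\to x^*$, with $x^*\in A$ as $A$ is closed, and $\mathscr{L}(x_n,\lambda^*,c_n)\le f^*$ by the bound above. By assumption~\ref{Assumpt_ALRW_InfAttainment} the infimum in \eqref{ALRW_Def} is attained at some $p_n$. Comparing the value at $c_n$ with the value at the smaller parameter $c_0$ (estimating the latter infimum by evaluating it at $p_n$) yields $(c_n-c_0)\sigma(p_n)\le\mathscr{L}(x_n,\lambda^*,c_n)-\mathscr{L}(x_n,\lambda^*,c_0)\le f^*-\mathscr{L}(x_n,\lambda^*,c_0)$. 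Since $\mathscr{L}(\cdot,\lambda^*,c_0)$ is l.s.c.\ on $A$ and finite, $\liminf_n\mathscr{L}(x_n,\lambda^*,c_0)\ge\mathscr{L}(x^*,\lambda^*,c_0)>-\infty$, so the right-hand side stays bounded and, dividing by $c_n-c_0\to+\infty$, I obtain $\sigma(p_n)\to 0$. The valley-at-zero property (assumption~\ref{Assumpt_ALRW_ValleyAtZero}) then forces $p_n\to 0$, for otherwise a subsequence would remain outside a neighbourhood of $0$ on which $\sigma$ is bounded below by a positive constant. Finally, since $c_n\sigma(p_n)\ge 0$ and $\langle\lambda^*,p_n\rangle\to 0$ (the coupling $\langle\lambda^*,\cdot\rangle$ being continuous), the inequality $\Phi(x_n,p_n)\le f^*+\langle\lambda^*,p_n\rangle-c_n\sigma(p_n)$ together with the lower semicontinuity of $\Phi$ on $A\times\{0\}$ gives $\Phi(x^*,0)\le f^*$. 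By the convention underlying the dualizing parameterization, under which $(\mathcal{P})$ coincides with $\min_{x\in A}\Phi(x,0)$ and $\Phi(\cdot,0)=+\infty$ off $\Omega$, this means $x^*$ is feasible with $f(x^*)\le f^*$, i.e.\ $x^*\in\Omega^*$; hence $\mathscr{L}$ is of penalty-type for $\lambda=\lambda^*$.

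Next I would verify the implication hypothesis. Assuming \eqref{InclImpliesStrictExactness} at some $\bar c\ge r$, some $x^*\in\Omega^*$ minimizes $\mathscr{L}(\cdot,\lambda^*,\bar c)$ over $A$ with minimum $f^*$; monotonicity gives $\inf_{x\in A}\mathscr{L}(x,\lambda^*,c)\ge f^*$ for $c\ge\bar c$, while $\mathscr{L}(x^*,\lambda^*,c)\le f^*$ keeps the value at $x^*$ equal to $f^*$, so the minimum stays $f^*$ for all $c\ge\bar c$. If $x$ satisfies $\mathscr{L}(x,\lambda^*,c)=f^*$ for some $c>\bar c$, then the same value is attained at $\bar c$, and evaluating the infimum at the minimizer $p(c)$ for parameter $c$ and comparing with parameter $\bar c$ forces $(c-\bar c)\sigma(p(c))\le 0$, whence $\sigma(p(c))=0$, $p(c)=0$, and $\Phi(x,0)=f^*$. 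Consequently $\mathscr{L}(x,\lambda^*,c')\le\Phi(x,0)=f^*$ for every $c'$, so $x$ minimizes $\mathscr{L}(\cdot,\lambda^*,c')$ over $A$ for all $c'\ge\bar c$; applying the already established penalty-type property to the constant sequence $x_n\equiv x$ with $c_n\to+\infty$ yields $x\in\Omega^*$. Thus $\argmin_{x\in A}\mathscr{L}(x,\lambda^*,c)=\Omega^*$ and the minimum equals $f^*$ for all $c\ge\bar c$, which is precisely strict global parametric exactness.

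Finally I would assemble the pieces. Having shown that $\mathscr{L}$ is always of penalty-type for $\lambda=\lambda^*$ and that the implication hypothesis holds, the sufficiency direction follows by invoking Theorem~\ref{Thrm_LocPrin_Param_Strict} when alternative~\ref{Assumpt_ALRW_1} (non-degeneracy) is assumed, and Theorem~\ref{Thrm_LocPrin_Param_Strict_Sublevel} when alternative~\ref{Assumpt_ALRW_2} holds; in the latter case monotonicity in $c$ upgrades boundedness of $\{x\in A\mid\mathscr{L}(x,\lambda^*,c_0)<f^*\}$ to the containment of each set $\{x\in A\mid\mathscr{L}(x,\lambda^*,c)<f^*\}$, $c\ge c_0$, in that bounded set, which is exactly condition~2 of Theorem~\ref{Thrm_LocPrin_Param_Strict_Sublevel}. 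For the necessity direction, if $\mathscr{L}$ is strictly globally parametrically exact, then the necessity part of Theorem~\ref{Thrm_LocPrin_Param_Strict} already delivers non-degeneracy for $\lambda=\lambda^*$ (alternative~\ref{Assumpt_ALRW_1}), local parametric exactness at every globally optimal solution, and the value identity $\mathscr{L}(x^*,\lambda^*,c)=f^*$, completing the equivalence.
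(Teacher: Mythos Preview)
Your proof is correct and follows the same overall architecture as the paper's: verify the implication hypothesis and the penalty-type property, then invoke Theorems~\ref{Thrm_LocPrin_Param_Strict} and \ref{Thrm_LocPrin_Param_Strict_Sublevel}. The paper simply outsources those two verifications to Propositions~4, 8 and Corollary~1 of \cite{Dolgopolik_ALRW}, whereas you supply self-contained arguments for them; the arguments you give (the $(c_n-c_0)\sigma(p_n)$ comparison to force $\sigma(p_n)\to 0$, the valley-at-zero to get $p_n\to 0$, lower semicontinuity of $\Phi$ at $(x^*,0)$, and the $p(c)=0$ trick for the implication hypothesis) are precisely the standard ones underlying the cited propositions. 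So there is no genuine methodological difference, only a difference in level of detail.

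Two small points worth tightening. First, when you pass from $\Phi(x^*,0)\le f^*$ to $x^*\in\Omega^*$, you invoke the convention $\Phi(\cdot,0)=+\infty$ off $\Omega$. This is indeed the Rockafellar--Wets convention (so that $\min_{x\in A}\Phi(x,0)$ reproduces $(\mathcal{P})$), but the present paper only writes ``$\Phi(x,0)=f(x)$ for any feasible point'', so it is good that you flagged the assumption explicitly; without it the feasibility step does not follow. Second, to conclude $\langle\lambda^*,p_n\rangle\to 0$ you need continuity of $p\mapsto\langle\lambda^*,p\rangle$, which is implicit in the Rockafellar--Wets setup but again not stated in the paper; noting it is harmless but worth a parenthetical remark. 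Finally, your ``WLOG $\bar c\ge r$'' in the implication step is justified because monotonicity of $c\mapsto\mathscr{L}(x,\lambda^*,c)$ propagates condition~\eqref{InclImpliesStrictExactness} from any $c$ to every larger $c'$; you use this implicitly and it is correct.
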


\begin{proof}
The fact that the validity of \eqref{RW_ALF_EquivExactnessCond} is equivalent to strict global parametric exactness of
the function $\mathscr{L}(x, \lambda, c)$ follows directly from \cite{Dolgopolik_ALRW}, Proposition~4
and Corollary~1. Furthermore, by \cite{Dolgopolik_ALRW}, Proposition~8, the function $\mathscr{L}(x, \lambda, c)$ is 
a penalty-type separating function for any $\lambda \in \Lambda$. Then applying
Theorems~\ref{Thrm_LocPrin_Param_Strict} and \ref{Thrm_LocPrin_Param_Strict_Sublevel} one obtains the desired result.
\end{proof}

\begin{remark}
Note that under the assumptions of the theorem the multipliers $\lambda^*$ is a strictly exact tuning parameter
if and only if it is an augmented Lagrangre multiplier \cite{Dolgopolik_ALRW}. Thus, the theorem above, in essence,
contains necessary and sufficient conditions for the existence of augmented Lagrange multipliers for the problem
$(\mathcal{P})$. See~\cite{Dolgopolik_ALRW} for applications of this theorem to some particular optimization
problems.
\end{remark}

Note that from the localization principle it follows that for the strict global parametric exactness of the augmented
Lagrangian $\mathscr{L}(x, \lambda, c)$ it is necessary that there exists a tuning parameter $\lambda^* \in \Lambda$
such that $\lambda^*$ is a locally exact tuning parameter at \textit{every} globally optimal solution of the problem
$(\mathcal{P})$. One can give a simple interpretation of this condition in the case when $\mathscr{L}(x, \lambda, c)$ is
a proximal Lagrangian. Namely, let $\mathscr{L}(x, \lambda, c)$ be the proximal Lagrangian (see~\cite{RockWets},
Example~11.57), and suppose that it is strictly globally parametrically exact with a strictly exact tuning parameter 
$\lambda^* \in \Lambda$. By the definition of strict global exactness, any globally optimal solution $x^*$ of the
problem $(\mathcal{P})$ is a global minimizer of the function $L(\cdot, \lambda^*, c)$ for all sufficiently large $c$.
Then applying the first order necessary optimality condition to the function $\mathscr{L}(\cdot, \lambda, c)$ one can
easily verify that under natural assumptions the pair $(x^*, \lambda^*)$ is a KKT-point of the problem $(\mathcal{P})$
for any $x^* \in \Omega^*$ (see~\cite{ShapiroSun}, Proposition~3.1). Consequently, one gets that for the strict global
parametric exactness of the augmented Lagrangian function $\mathscr{L}(x, \lambda, c)$ it is \textit{necessary} that
there exists a Lagrange multiplier $\lambda^*$ such that the pair $(x^*, \lambda^*)$ is a KKT-point of the problem
$(\mathcal{P})$ for \textit{any} globally optimal solution $x^*$ of this problem. In particular, if there exist two
globally optimal solutions of the problem $(\mathcal{P})$ with disjoint sets of Lagrange multipliers, then the proximal
Lagrangian cannot be strictly globally parametrically exact.

For the sake of completeness, let us mention that in the case of augmented Lagrangian functions, sufficient conditions
for the local exactness are typically derived with the use of sufficient optimality conditions. In particular, the
validity of the second order sufficient optimality conditions at a given globally optimal solution $x^*$
guarantees that the proximal Lagrangian is locally parametrically exact at $x^*$ with the corresponding Lagrange
multiplier being a locally exact tuning parameter (see, e.g., \cite{Bazaraa}, Theorem~9.3.3; \cite{SunLi}, Theorem~2.1;
\cite{LiuTangYang2009}, Theorem~2; \cite{WangZhouXu2009}, Theorem~2.3; \cite{LuoMastroeniWu2010}, Theorems~3.1 and 3.2;
\cite{ZhouXiuWang}, Theorem~2.8; \cite{ZhouZhouYang2014}, Proposition~3.1; \cite{ZhouChen2015}, Theorem~2.3;
\cite{WuLuoYang2014}, Theorem~3, etc.).

\section{Conclusions}

In this paper we developed a general theory of global parametric exactness of separating function for finite-dimensional
constrained optimization problems. This theory allows one to reduce a constrained optimization problem to an
unconstrained one, provided an exact tuning parameter is known. With the use of the general results obtained in this
article we recovered existing results on the global exactness of linear penalty functions and Rockafellar-Wets'
augmented Lagrangian function. We also obtained new simple necessary and sufficient conditions for the global exactness
of nonlinear and continuously differentiable penalty functions.

\bibliographystyle{abbrv}  
\bibliography{ESF_I_bibl}

\end{document}